\documentclass[12pt]{amsart}
\usepackage{amscd}
\usepackage{amssymb}
\linespread{1.1}
\usepackage{amstext}
\usepackage{amsthm}
\numberwithin{equation}{section}
\usepackage{latexsym}
\usepackage{xcolor}
\numberwithin{equation}{section}

\DeclareMathOperator{\Hol}{Hol}
\DeclareMathOperator{\Span}{Span}
\DeclareMathOperator{\Ker}{Ker}

\renewcommand{\phi}{\varphi}

\newcommand{\clos}{{\rm Clos}\,}

\newtheorem{Thm}{Theorem}[section]
\newtheorem{theorem}[Thm]{Theorem}
\newtheorem{lemma}[Thm]{Lemma}
\newtheorem{claim}[Thm]{Claim}

\newtheorem{corollary}[Thm]{Corollary}

\newcommand{\FF}{\mathcal{F}}
\newcommand{\EE}{\mathcal{E}}
\newcommand{\PP}{\mathcal{P}}
\newcommand{\ZZ}{\mathcal{Z}}
\newcommand{\TT}{\mathcal{T}}
\newcommand{\CCC}{\mathcal{C}}
\newcommand{\CC}{\mathbb{C}}
\newcommand{\ve}{{\varepsilon}}
\DeclareMathOperator{\dist}{dist}
\DeclareMathOperator{\card}{card}
\DeclareMathOperator{\supp}{supp}
\DeclareMathOperator{\osc}{osc}

\begin{document}
\sloppy
\title[The Newman--Shapiro problem]
{The Newman--Shapiro problem}

\author{Yurii Belov, Alexander Borichev}
\address{
Yurii Belov,
\newline St.~Petersburg State University, St. Petersburg, Russia,
\newline {\tt j\_b\_juri\_belov@mail.ru}
\smallskip
\newline \phantom{x}\,\, Alexander Borichev,
\newline I2M, CNRS, Aix-Marseille Universit\'e, 13453 Marseille, France, 
\newline {\tt alexander.borichev@math.cnrs.fr}
}
\thanks{This work was supported by Russian Science Foundation grant 17-11-01064.}

\begin{abstract} We give a negative answer to the Newman--Shapiro problem on weighted approximation for entire functions formulated in 1966 
and motivated by the theory of operators on the Fock space. There exists a function in the Fock space such that its exponential multiples do not approximate 
some entire multiples in the space. 
Furthermore, we establish several positive results under different restrictions on the function in question.
\end{abstract}

\maketitle

\section{Introduction and the main results}

Let $\FF=\FF_{(1)}$ be the classical Bargmann--Segal--Fock space, where
$$
\mathcal F_{(\alpha)}=\Big\{F\in\Hol(\CC):\|F\|^2_\mathcal F=\frac1\pi\int_{\CC}
|F(z)|^2e^{-{\alpha\pi}|z|^2}\,dm(z)<\infty\Big\},
$$ 
and $m$ stands for the area Lebesgue measure. This space serves as a model of the phase space of a
particle in quantum mechanics and so plays an important role in theoretical physics. Moreover, this space appears in time-frequency
analysis, as a spectral model of $L^2(\mathbb{R})$ 
via the Bargmann transform (see, e.g., \cite[Section 3.4]{Gro}). 
Note also that the complex exponentials
$e_\lambda$, $e_\lambda(z)=e^{\lambda z}$ are the reproducing kernels of $\mathcal{F}$, i.e., 
$$
\langle F,k_\lambda\rangle_{\mathcal{F}}=F(\lambda), \qquad F\in\mathcal{F},
$$
where $k_{\lambda}=\pi e_{\pi\bar{\lambda}}$.
\smallskip

In 1966, D.~J.~Newman and H.~S.~Shapiro posed in \cite{NS} the following problem about 
the structure of the operator adjoint to the multiplication
operator in Fock space. Let $F$ be an entire function such that, for every $A>0$,
\begin{equation}
|F(z)|\leq C(A,F)\exp\Bigl(\frac\pi2|z|^2-A|z|\Bigr),\qquad z\in\mathbb C.
\label{expest}
\end{equation}
This condition is equivalent to the following one:
$e_\lambda \cdot F \in\mathcal{F}$ for every $\lambda\in\mathbb{C}$. Now we can define 
the multiplication operator $M_F:G\mapsto FG$ 
on the 
linear span of the exponentials
$$
\mathcal{L}=\Span\{e_\lambda : \lambda\in\mathbb{C}\}.
$$
The natural domain of the operator $M_F$ is given by
$$
\mathcal{R}_F=\{G\in\mathcal{F}: FG\in\mathcal{F}\}.
$$
Thus, we can consider the adjoint operator $M^*_F$ as well as the operator adjoint to the restriction $M_F\bigl{|}_{\mathcal{L}}$, which we
(following \cite{NS}) denote by $F^*\bigl{(}\frac{d}{dz}\bigr{)}$. This notation is motivated by the fact that when $F=P$ is a polynomial, we have
$$
P(\lambda)G(\lambda)=\langle M_PG, \pi e_{\pi\bar{\lambda}}\rangle =
\langle G, P^*(d\slash{dz})(\pi e_{\pi\bar{\lambda}})\rangle,
$$
where $P^*(z)=\overline{P(\bar{z})}$ and 
$P^*(\frac{d}{dz})$ is understood in the usual sense as a differential 
operator. In this case it is easy to see that 
$M^*_P=P^*\bigl{(}\frac{d}{dz}\bigr{)}$. The Newman--Shapiro 
problem (related to a much earlier work of E.~Fischer \cite{Fisch}) 
is whether $M^*_F=F^*\bigl{(}\frac{d}{dz}\bigr{)}$ 
for all $F$ satisfying \eqref{expest}. In \cite{NS} (see also \cite{NS1} and an extended unpublished manuscript \cite{NS2}) Newman and Shapiro proved that this is the case when $F$ is an exponential polynomial (i.e.,  
$F=\sum_{k=1}^nP_ke_{\lambda_k}$, where $P_k$ are polynomials and $\lambda_k\in\mathbb{C}$) and for some other special cases (i.e. $F$ has no zeros or $F(z)=\sin z\slash z$). Moreover, they revealed some connections of this problem with  weighted polynomial approximation in $\mathcal{F}$. More precisely, they proved the following result (to avoid inessential technicalities we assume that $F$ has simple zeros only). 
Denote by $\mathcal E$ the space of all entire functions.

\begin{theorem}[{\cite[Theorem 1]{NS}}, \cite{NS1}]
For every $F$ satisfying estimates \eqref{expest} the following statements are equivalent:
\begin{enumerate}
\begin{item}
$\overline{\Span}\{z^nF, n\geq0\}=\mathcal E F\cap \mathcal F$;
\end{item}
\begin{item}
$M^*_F=F^*\bigl{(}\frac{d}{dz}\bigr{)}$;
\end{item}
\begin{item}
$\Ker F^*\bigl(\frac{d}{dz}\bigr)=\overline{\Span}\bigl\{e_{\bar{\lambda}}: e_{\bar{\lambda}}\in 
\Ker F^*\bigl(\frac{d}{dz}\bigr)\bigr\}=\overline{\Span}\bigl\{e_{\bar{\lambda}}: F(\lambda)=0\bigr\}.$
\end{item}
\end{enumerate}
\label{NSTh}
\end{theorem}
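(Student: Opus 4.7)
I would first establish that $\overline{\Span}\{z^n F : n \ge 0\} = \overline{\Span}\{F e_\la : \la \in \CC\}$ as closed subspaces of $\mathcal{F}$. For the inclusion $\supset$, dominated convergence applied with \eqref{expest} for $A>|\la|$ produces an integrable majorant of the form $C(A)^2 e^{-2(A-|\la|)|z|}$ for the tails of $|F(z)|^2|e^{\la z} - S_N(z)|^2 e^{-\pi|z|^2}$ (with $S_N$ the $N$th Taylor polynomial of $e_\la$), so $S_N F \to F e_\la$ in $\mathcal{F}$. For $\subset$, the vector-valued map $\la \mapsto F e_\la \in \mathcal{F}$ is holomorphic (pointwise analyticity together with local boundedness from \eqref{expest}), and its Taylor coefficients at $\la = 0$ are exactly $z^n F/n!$, which therefore lie in the closed span of the values.

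\textbf{(1) $\Leftrightarrow$ (3) by bipolarity.} Since $F^*(\frac{d}{dz}) = (M_F|_\mathcal{L})^*$, the identity $\Ker T^* = (\text{Ran}\,T)^\perp$ together with the preliminary step gives
$$
\Ker F^*\bigl(\tfrac{d}{dz}\bigr) = \overline{\Span}\{F e_\la : \la \in \CC\}^\perp = \overline{\Span}\{z^n F\}^\perp.
$$
On the other side, $\mathcal{E} F \cap \mathcal{F}$ is a closed subspace of $\mathcal{F}$ (locally uniform convergence, implied by $\mathcal{F}$-convergence, preserves vanishing at the zeros of $F$ with multiplicities, so a limit of $F$-multiples remains an $F$-multiple). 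Its annihilator equals $\overline{\Span}\{e_{\bar\la}: F(\la)=0\}$: the $\supset$ direction follows because each $e_{\bar\la}$ is proportional to the reproducing kernel at the point corresponding to $\la$, and $F(\la)=0$ forces the evaluation there to vanish on $\mathcal{E} F\cap \mathcal{F}$; the $\subset$ direction is a completeness-of-system statement for the reproducing kernels at the zero set of $F$. Equating these closed subspaces via their annihilators yields (1) $\Leftrightarrow$ (3).

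\textbf{(2) $\Leftrightarrow$ (1).} Closedness of $M_F$ (immediate from continuity of point evaluations in $\mathcal{F}$) together with $M_F|_\mathcal{L} \subset M_F$ gives $M_F^* \subset F^*(\frac{d}{dz})$ unconditionally, so (2) is equivalent to $\mathcal{L}$ being a core of $M_F$: for every $G \in \mathcal{R}_F$ there exist $g_n \in \mathcal{L}$ with $g_n \to G$ and $F g_n \to FG$ in $\mathcal{F}$. Assuming (1), for such $G$ I first obtain polynomials $p_n$ with $p_n F \to FG$ in $\mathcal{F}$; the preliminary step lets me replace each $p_n$ by a nearby exponential polynomial, and a diagonal refinement produces $g_n \in \mathcal{L}$ satisfying both limits. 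Conversely, if $\mathcal{L}$ is a core then $F\mathcal{R}_F \subset \overline{F\mathcal{L}} = \overline{\Span}\{z^n F\}$; combined with the density $\overline{F\mathcal{R}_F} = \mathcal{E} F \cap \mathcal{F}$ (itself a byproduct of (3) via $(\mathcal{E} F \cap \mathcal{F})^\perp = \Ker M_F^*$), this yields (1).

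\textbf{Main obstacle.} The crux is the $\subset$ direction in $(\mathcal{E} F \cap \mathcal{F})^\perp = \overline{\Span}\{e_{\bar\la}: F(\la)=0\}$, i.e.\ the completeness of the reproducing-kernel system at the zero set of $F$; this is a genuine Fock-space interpolation fact that uses the controlled density of zeros forced by \eqref{expest}. A secondary difficulty is the upgrade from the $\mathcal{F}$-approximation $p_n F \to FG$ provided by (1) to a graph-norm approximation for $M_F$, since the $p_n$ need not converge to $G$ in $\mathcal{F}$ near the zeros of $F$ and one must extract a better-behaved sequence in $\mathcal{L}$.
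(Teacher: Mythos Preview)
The paper does not contain its own proof of this theorem: it is quoted as a result of Newman and Shapiro, with references \cite{NS}, \cite{NS1}, and is used only as background. So there is no proof in the paper to compare your proposal against. The one piece that the paper does establish independently is your preliminary identification $\overline{\Span}\{z^nF\}=\overline{\Span}\{e_\lambda F\}$, which appears as Lemma~\ref{L3}; your argument for it is essentially the same as the one given there.

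On the substance of your sketch: you have the difficulty exactly backwards. The inclusion you label the ``main obstacle'', namely $(\EE F\cap\FF)^\perp\subset\overline{\Span}\{e_{\bar\lambda}:F(\lambda)=0\}$, is immediate once you take double orthogonals: it amounts to showing that any $G\in\FF$ vanishing on $Z(F)$ (with multiplicities) factors as $G=FH$ with $H$ entire, and that is just division of entire functions, requiring nothing about densities of zero sets. By contrast, what you call the ``secondary difficulty'' --- upgrading the range approximation $p_nF\to FG$ furnished by (1) to a \emph{graph} approximation $(g_n,Fg_n)\to(G,FG)$ with $g_n\in\mathcal L$ --- is the genuine content of (1)$\Rightarrow$(2), and your sketch does not explain how to achieve it. A related loose end is your appeal to $\overline{F\mathcal R_F}=\EE F\cap\FF$ in the (2)$\Rightarrow$(1) direction: note that $\mathcal R_F$ consists only of multipliers $G\in\FF$, whereas $\EE F\cap\FF$ allows arbitrary entire multipliers, so this equality is not tautological and your justification via ``(3)'' is circular at that point in the argument.
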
 

The Newman--Shapiro problem remained open since 1966. Several similar questions were studied, e.g.,
in \cite{M} (see also \cite[Chapter X.8]{Eren}). For related questions on Toeplitz operators on the Fock space see \cite{CS} 
and the references therein.

It should be mentioned that the Newman--Shapiro problem is closely related to 
the spectral synthesis (hereditary completeness) problem for systems 
of reproducing kernels in the Fock space (or of Gabor-type
expansions with respect to time-frequency shifts of the Gaussian). 
In the Paley--Wiener space setting, the spectral synthesis problem 
was solved in \cite{BBB}, whereas for the reproducing kernels of the Fock 
space the solution (in general, also negative) was recently given in \cite{BBB1}.

In this article we prove that the answer to the Newman--Shapiro problem is in general negative and establish several positive results under different restrictions on the growth and regularity of the function $F$. 

The original Newman--Shapiro problem is formulated for the Fock spaces on $\mathbb C^n$, $n\ge 1$. 
Here, we restrict ourselves to the case $n=1$. 
The negative answer to the Newman--Shapiro problem in the case $n=1$ means the negative answer for every $n\ge 1$.
It seems plausible that one should use different techniques to obtain positive results in the case $n>1$.

\begin{theorem} For any $\alpha\in(1,2)$, there exist two entire functions 
$F$ and $G$ such that $G,GF\in \mathcal F$ and for every entire function $h$ of order 
at most $\alpha$ we have $hF\in \mathcal F$, but 
$$
GF\notin \overline{\Span}\bigl\{pF:\, p\in\mathcal P\bigr\} = \overline{\Span}\bigl\{e_\lambda F: \lambda\in\CC\bigr\}.
$$
\label{cexTh}
\end{theorem}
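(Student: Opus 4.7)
The plan is to use Theorem~\ref{NSTh} to reduce the problem to a duality/annihilator construction. Since the hypothesis on $F$ in Theorem~\ref{cexTh} is compatible with \eqref{expest}, the equivalence (1)$\Leftrightarrow$(2) of Theorem~\ref{NSTh} applies, and the Hahn--Banach theorem reduces the conclusion to exhibiting an $H\in\mathcal F$ with $\langle z^nF,H\rangle_{\mathcal F}=0$ for every $n\ge 0$, together with an entire $G$ satisfying $G,GF\in\mathcal F$ and $\langle GF,H\rangle_{\mathcal F}\ne 0$. Indeed, the first condition on $H$ forces $H\perp\overline{\Span}\{pF:p\in\mathcal P\}$, while the second places $GF$ outside this closure.

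A direct reproducing-kernel computation gives $\langle z^nF,e_{\bar\mu}\rangle_{\mathcal F}=\pi^{-1}(\mu/\pi)^n F(\mu/\pi)$, which suggests seeking $H$ in the form
$$
H=\sum_k c_k\,e_{\bar\mu_k},\qquad \mu_k/\pi\in\{F=0\},
$$
with $(c_k)$ chosen so that the series converges in $\mathcal F$; for such $H$ the orthogonality to every $z^nF$ is automatic. The pairing $\langle GF,H\rangle_{\mathcal F}$ then expresses, after differentiating the reproducing-kernel identity (using $F(\mu_k/\pi)=0$), as a series involving $\overline{G(\mu_k/\pi)}\,F'(\mu_k/\pi)$ that must be arranged to be non-zero.

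The technical core is a simultaneous construction of $F$, the points $\mu_k$, the coefficients $c_k$, and $G$, subject to three coupled constraints: (i)~the canonical product $F$ has the decay $|F(z)|\le C(A)\exp(\tfrac\pi2|z|^2-A|z|^\alpha)$ for every $A>0$, which yields $hF\in\mathcal F$ for every entire $h$ of order $\le\alpha$; (ii)~the Gram matrix $[\pi^{-1}\exp(\bar\mu_j\mu_k/\pi)]$ admits a non-trivial sequence $(c_k)$ producing $H\in\mathcal F$; (iii)~there exists an entire $G$ with $G,GF\in\mathcal F$ and $\sum_k c_k\,\overline{G(\mu_k/\pi)}\,F'(\mu_k/\pi)\ne 0$. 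To achieve (i), I would place the zeros of $F$ on a perturbation of the Fock-critical square lattice of density $\pi$, augmented by clustered zeros on thin annuli $\{r_n\le|z|\le r_n+\delta_n\}$ along a geometric sequence $r_n\to\infty$, using Jensen's formula to control $\log|F|$. The data $(c_k)$ and $G$ would be constructed by an inductive scheme along the scales $r_n$, preserving a bounded-away-from-zero partial sum for the pairing at each stage while controlling the tail by geometric summability.

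The main obstacle is reconciling (i) and (iii). The strong decay of $|F|$ imposed by (i) shrinks $\overline{\Span}\{pF\}$, while producing $G$ with $GF\in\mathcal F$ outside this closure demands that the zero structure of $F$ exhibit a genuine defect of spectral synthesis. The construction must thread a narrow needle between these requirements, and I expect the heart of the proof to lie in an explicit geometric placement of zero clusters that depresses $\log|F|$ by $A|z|^\alpha$ while leaving enough room for a sequence $(c_k)$ and an entire $G$ producing a non-vanishing dual pairing.
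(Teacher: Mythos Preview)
Your reduction via Hahn--Banach is fine, but the proposed annihilator $H=\sum_k c_k\,e_{\bar\mu_k}$ with $\mu_k/\pi\in Z(F)$ cannot work. Since $e_{\bar\mu_k}=\pi^{-1}k_{\mu_k/\pi}$ and $k_\lambda$ is the reproducing kernel, one has
$$
\langle GF,\,e_{\bar\mu_k}\rangle_{\mathcal F}=\pi^{-1}(GF)(\mu_k/\pi)=\pi^{-1}G(\mu_k/\pi)\,F(\mu_k/\pi)=0
$$
for every $k$, so $\langle GF,H\rangle_{\mathcal F}=0$ automatically; no ``differentiation of the reproducing-kernel identity'' enters, and no $F'(\mu_k/\pi)$ term appears. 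More conceptually, $\overline{\Span}\{k_\lambda:\lambda\in Z(F)\}$ equals $(\mathcal E F\cap\mathcal F)^\perp$ (this is the content of the computation $\langle EF,k_\lambda\rangle=E(\lambda)F(\lambda)=0$), so any $H$ assembled from these reproducing kernels annihilates \emph{all} of $\mathcal E F\cap\mathcal F$, in particular $GF$. This is precisely the obstruction recorded in condition~(3) of Theorem~\ref{NSTh}: a counterexample requires an $H$ lying in the gap between $(\overline{\Span}\{pF\})^\perp$ and $(\mathcal E F\cap\mathcal F)^\perp$, and such an $H$ can never be a combination of the $k_\lambda$'s at $Z(F)$. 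Replacing $e_{\bar\mu_k}$ by derivatives $\partial_\mu e_{\bar\mu}|_{\mu_k}$ does produce pairings involving $F'$, but those derivatives are no longer orthogonal to $z^nF$ (one gets $(\mu_k/\pi)^nF'(\mu_k/\pi)\ne 0$), so the first constraint fails.

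The paper's argument avoids the annihilator altogether and is of a completely different nature. It constructs $F$ and $G$ explicitly as Cauchy integrals over sector boundaries of the functions $\exp(\tfrac\pi2 z^2-z^\beta)$ and $\exp(z^\sigma)$, with carefully nested exponents $\alpha<\beta<\gamma<\delta<\eta<\sigma<2$; no canonical product and no control on $Z(F)$ are used. The key step is a \emph{growth obstruction}: from $\|PF\|_{\mathcal F}\le 1$ one extracts, via the explicit asymptotics of $F$ along two rays and a Phragm\'en--Lindel\"of argument in a half-plane, the uniform bound $|P(x)|\le C\exp(x^\gamma)$ on $\mathbb R_+$. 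Since $G(x)\sim\exp(x^\sigma)$ with $\sigma>\gamma$, no sequence $P_n$ with $P_nF\to GF$ in $\mathcal F$ (hence uniformly on compacta) can exist. The separating functional $H$ is never written down; the proof shows directly that $GF$ is not a $\mathcal F$-limit of $\mathcal P F$.
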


Thus, the equivalent conditions of Theorem~\ref{NSTh} do not hold for $F$.  
Here and later on, $\PP$ is the space of the polynomials. 

Next we establish that under more restrictive growth and regularity conditions on 
the function $F$ the answer to the Newman--Shapiro problem becomes positive. 

Given $\alpha\ge 0$, denote by $\EE_{2,\alpha}$ 
the class of all entire functions of type at most $\alpha$ for order $2$, that is 
$$
\limsup_{|z|\to\infty}\frac{\log|F(z)|}{|z|^2}\le\alpha.
$$
Set $\EE_2=\cup_{\alpha<\infty}\EE_{2,\alpha}$.

Given $F\in\EE_2$, consider its indicator function for order $2$, 
$$
h_F(\theta)=\limsup_{r\to\infty}\frac{\log|F(re^{i\theta})|}{r^2},\qquad \theta\in[0,2\pi].
$$
We say that $F\in\EE_2$ is of completely regular growth if $\log|F(re^{i\theta})|/r^2$ converges uniformly in $\theta\in[0,2\pi]$ to $h_F(\theta)$ 
as $r\to\infty$ and $r\not\in E_F$ for some set $E_F\subset[0,\infty)$ of zero relative measure, that is 
$$
\lim_{R\to\infty}\frac{E_F\cap[0,R]}{R}=0.
$$

\begin{theorem} Let $F\in \EE$. Suppose that there exist $G\in\EE_2$ of completely regular growth 
and $\alpha<1$ such that $(FG\cdot \EE)\cap \FF_{(\alpha)}=FG\cdot\mathbb C$, and $\inf_{[0,2\pi]}h_G>0$.
Then $F\in \FF_{(\gamma)}$ for every $\gamma\ge \alpha$, and 
$$
\overline{\Span}\bigl\{e_\lambda F: \lambda\in\mathbb C\bigr\}=\EE F\cap \FF.
$$
\label{mainthm2}
\end{theorem}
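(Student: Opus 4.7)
First I would establish $F \in \FF_{(\gamma)}$ for every $\gamma > \alpha$. Setting $h \equiv 1$ in the hypothesis gives $FG \in \FF_{(\alpha)}$. Completely regular growth of $G$ combined with $\beta := \inf_\theta h_G(\theta) > 0$ supplies the asymptotic lower bound $|G(re^{i\theta})| \ge \exp((\beta - \varepsilon)r^2)$ uniformly in $\theta$ for $r \notin E_G$, where $E_G$ has zero relative measure. Dividing, $|F(z)|^2 e^{-\gamma\pi|z|^2} \le C\,|FG(z)|^2 e^{-\alpha\pi|z|^2}$ on good radii whenever $\gamma > \alpha$, so the integral over $\{|z| \notin E_G\}$ is finite by $FG \in \FF_{(\alpha)}$. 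For the bad radii, the sub-mean-value inequality applied to the subharmonic function $|F|^2$, combined with the zero relative measure of $E_G$, transfers the estimate. Since $\alpha < 1$, choosing $\gamma \in (\alpha, 1)$, the reproducing kernel bound gives $|F(z)| \le C\exp(\gamma\pi|z|^2/2)$, and the excess factor $\exp((\gamma-1)\pi|z|^2/2)$ decays faster than any $e^{-A|z|}$, verifying \eqref{expest}.

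Theorem~\ref{NSTh} now applies and reduces the claim to its condition (i): $\overline{\Span}\{z^n F : n \ge 0\} = \EE F \cap \FF$. By Hahn--Banach duality, this is equivalent to showing that every $k \in \EE$ with $kF \in \FF$ and
$$
\int_\CC k(z)\,\bar z^n\,|F(z)|^2\,e^{-\pi|z|^2}\,dm(z) = 0 \quad \text{for all } n \ge 0
$$
must vanish identically.

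The main obstacle is to derive this vanishing from the extremality hypothesis $(FG\cdot\EE)\cap\FF_{(\alpha)} = FG\cdot\CC$. My plan is a proof by contradiction: assuming a nontrivial such $k$ exists, I would produce a nonconstant entire $u$ with $uFG \in \FF_{(\alpha)}$, thereby contradicting the hypothesis. The direct attempt $u = k$ fails since the ratio $|G|^2 e^{(1-\alpha)\pi|z|^2}$ is unbounded, so a more refined construction is needed. I would regularize $k$ by an integral transform against a Bergman-type kernel tuned to $\FF_{(\alpha)}$: the orthogonality of $k$ to all polynomials should supply both the holomorphicity of $u$ and the decisive $L^2$-control on $uFG$, while the regular Hadamard distribution of the zeros of $G$ (ensured by $G \in \EE_2$ with $\inf h_G > 0$) should allow $u$ to extend entirely across those zeros. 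This last step---making the regularization actually produce a nonconstant entire multiplier landing in $\FF_{(\alpha)}$ after multiplication by $FG$---is the technical heart of the argument.
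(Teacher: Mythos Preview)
Your reduction to condition~(i) via Hahn--Banach is fine, and the argument for $F\in\FF_{(\gamma)}$ is essentially right (though the ``bad radii'' step needs a bit more care). The genuine gap is the final step: you do not actually construct the regularized multiplier $u$, and nothing in your outline explains why any concrete integral transform of $k|F|^2e^{-\pi|\cdot|^2}$ against a Bergman-type kernel would produce a \emph{nonconstant entire} function with $uFG\in\FF_{(\alpha)}$. The orthogonality $\int k\bar z^n|F|^2e^{-\pi|z|^2}\,dm=0$ says that a certain measure annihilates anti-analytic polynomials; it is not clear how to leverage this into the very strong decay needed, nor why the zeros of $G$ would help $u$ extend across them. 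As written, this is an aspiration rather than a mechanism.

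The paper's route is quite different and does not try to manufacture a second entire multiple of $FG$ in $\FF_{(\alpha)}$. Instead it runs a continuity argument: set $V(z)=G(z)\sigma((1-\alpha)^{1/2}z)$ and its dilations $V_s(z)=V(sz)$; for small $s$ one has $FV_s\in[F]_\FF$ by direct approximation, while Lemma~\ref{L6} (which itself uses the extremality hypothesis) forces $FV\notin[F]_\FF$. So there is a critical $s$ with $FV_s\in[F]_\FF$ but $FV_{s+\delta}\notin[F]_\FF$. One then picks $W$ with $FW\perp[F]_\FF$ but $FW\not\perp FV_{s+\delta}$ and studies the entire function $U=\sigma_0\cdot\mathcal I(FW,F)$ via the discrete Cauchy-transform machinery of Section~\ref{sect3}. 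Lemmas~\ref{L2} and~\ref{L5} show that a certain correction $S$ lies in a small-type class and is bounded on a dense lattice, so Lemma~\ref{L22} makes it constant; then Lemma~\ref{L26} forces $UV_{s+\delta}$ to have a zero near almost every lattice point, which is incompatible with $UV_{s+\delta}\in\FF$ after a shift. The extremality of $FG$ in $\FF_{(\alpha)}$ enters only through Lemma~\ref{L6}, not through any direct construction of a second multiple.
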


Thus, the equivalent conditions of Theorem~\ref{NSTh} hold for such $F$.  

The conditions of the theorem mean that the zero set of $F$ can be complemented by a set of positive angular density to a set $\Lambda$ such that the 
system $\{k_\lambda\}_{\lambda\in \Lambda}$ is complete and minimal in $\FF_{(\alpha)}$. 

When the zero set of $F$ is sufficiently regular and not very dense, we get the following result.

\begin{corollary} Let $F\in \FF$ be of completely regular growth. Suppose that the upper Beurling--Landau density 
$D^+_{Z(F)}$ of the zero set $Z(F)$ of $F$ (with multiplicities taken into account) is less than $1/\pi$:
\begin{equation}
\limsup_{R\to\infty} \sup_{z\in\CC}\frac{\card(Z(f)\cap D(z,R))}{\pi R^2}<\frac1\pi.
\label{zve}
\end{equation}
Then  
$$
\overline{\Span}\bigl\{e_\lambda F: \lambda\in\mathbb C\bigr\}=\EE F\cap \FF.
$$
\label{cor}
\end{corollary}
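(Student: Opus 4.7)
The plan is to deduce the corollary from Theorem~\ref{mainthm2} by constructing $G$ such that $FG$ is a canonical order-$2$ product sitting at the boundary of some $\FF_{(\alpha)}$ with $\alpha<1$; the density hypothesis $D^+_{Z(F)}<1/\pi$ provides the room for the construction.

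I first fix $\alpha\in(0,1)$ close enough to $1$ that both $\alpha>D^+_{Z(F)}$ and $\alpha\pi/2>\sup_\theta h_F(\theta)$. Such an $\alpha$ exists: the first inequality follows from $D^+_{Z(F)}<1/\pi<1$, and the second from the fact that $F\in\FF$ of completely regular growth forces the strict inequality $\sup h_F<\pi/2$ (otherwise the uniform asymptotic $|F(z)|^2 e^{-\pi|z|^2}=e^{(2h_F(\theta)-\pi)|z|^2+o(|z|^2)}$, together with continuity of $h_F$, would make the Fock integral divergent along the extremal direction). Next I complete the multiset $Z(F)$ to a uniformly discrete, isotropically distributed set $\Lambda\subset\CC$ of areal density $\alpha$, for instance by perturbing the square lattice $\alpha^{-1/2}(\mathbb Z+i\mathbb Z)$ so that its points closest to the zeros of $F$ are shifted onto them (feasible because $D^+_{Z(F)}<\alpha$). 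Let $P$ denote the canonical order-$2$ Weierstrass product over $\Lambda$; by classical sigma-function estimates $P$ is of completely regular growth with isotropic indicator $h_P\equiv\alpha\pi/2$, and $|P(z)|^2 e^{-\alpha\pi|z|^2}$ is bounded above on $\CC$ and bounded below by a positive constant off an $\varepsilon$-neighborhood $\Omega_\varepsilon$ of $Z(P)$.

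Set $G:=P/F$. Since $Z(F)\subset Z(P)$ as multisets, $G$ is entire of order $2$; as a quotient of two functions of completely regular growth of order $2$ with matched zero multisets it is itself of completely regular growth, with indicator $h_G=\alpha\pi/2-h_F$, hence $\inf h_G>0$ by the choice of $\alpha$. For the hypothesis $(FG\cdot\EE)\cap\FF_{(\alpha)}=FG\cdot\CC$: since $|FG(z)|^2 e^{-\alpha\pi|z|^2}=|P(z)|^2 e^{-\alpha\pi|z|^2}$ is bounded but not integrable, one has $FG\notin\FF_{(\alpha)}$ and the right-hand side reduces to $\{0\}$. If $h\in\EE$ satisfies $hP\in\FF_{(\alpha)}$, the lower bound on $|P|^2 e^{-\alpha\pi|z|^2}$ off $\Omega_\varepsilon$ yields $|h|^2\in L^1(\CC\setminus\Omega_\varepsilon)$, and subharmonicity of $|h|^2$ then extends this to $|h|^2\in L^1(\CC)$, which forces the entire $h$ to vanish. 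Applying Theorem~\ref{mainthm2} to this $G$ and $\alpha$ yields the identity $\overline{\Span}\{e_\lambda F:\lambda\in\CC\}=\EE F\cap\FF$.

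The main technical obstacle is the joint realization of the properties of $\Lambda$: it must contain $Z(F)$, achieve the prescribed isotropic areal density $\alpha$, and be regular enough for both the canonical product $P$ and the quotient $G=P/F$ to be of completely regular growth (so that the indicator subtraction $h_G=h_P-h_F$ is justified and the lower bound on $|P|$ off $\Omega_\varepsilon$ holds). The hypothesis $D^+_{Z(F)}<1/\pi$ is precisely what affords the necessary slack in the target lattice to absorb the zeros of $F$ without destroying uniform discreteness or isotropy.
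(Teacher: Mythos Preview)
Your strategy coincides with the paper's: complete $Z(F)$ to a set of density $\alpha<1$, form the associated canonical order-$2$ product, divide by $F$ to get $G$, and invoke Theorem~\ref{mainthm2}. Two gaps remain.

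The hypothesis $(FG\cdot\EE)\cap\FF_{(\alpha)}=FG\cdot\CC$ of Theorem~\ref{mainthm2} is a literal set equality and entails $FG\in\FF_{(\alpha)}$. With your $FG=P$ you correctly show $(P\cdot\EE)\cap\FF_{(\alpha)}=\{0\}$, but $P\cdot\CC$ is one-dimensional, so the equality fails; your phrase ``the right-hand side reduces to $\{0\}$'' is a misreading. The paper repairs this by removing two points: it sets $H_1=H/[(\cdot-\lambda_1)(\cdot-\lambda_2)]$ for $\lambda_1,\lambda_2\in\Lambda$ and takes $G=H_1/F$, so that $FG=H_1\in\FF_{(\alpha)}$ (the extra factor $|z|^{-2}$ makes the Fock integral converge) while any non-constant entire multiple of $H_1$ is still excluded from $\FF_{(\alpha)}$.

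Second, ``shifting the closest lattice points onto the zeros of $F$'' is not robust enough to produce the uniform two-sided estimate on $|P|$ that you need: zeros of $F$ can cluster arbitrarily while still satisfying $D^+_{Z(F)}<1/\pi$, so there is no reason for a bounded-displacement bijection with nearby lattice points, and unbounded displacements spoil the sigma-type asymptotics. The paper instead \emph{adds} points $\Lambda$ disjoint from $Z(F)$ so that in each square of a large grid the total count in $Z(F)\cup\Lambda$ is exactly $\alpha M^2$ and, crucially, the first three moments of the discrepancy measure $\alpha M^2\delta_z-\sum_\lambda\delta_\lambda$ vanish; this moment-matching (following \cite{BGL}, \cite{LM}) is precisely what delivers the pointwise two-sided estimate on the canonical product $H$, from which both $H_1\in\FF_{(\alpha)}$ and the completely regular growth of $G=H_1/F$ follow.
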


Here and later on $D(z,r)$ stands for the open disc centered at $z$ of radius $r$. 

Condition \eqref{zve} is indispensable here as demonstrates the example given in the proof of Theorem~\ref{cexTh}.

When we restrict the growth of $F$, there are no more regularity restrictions on the zeros: 

\begin{theorem} There exists $\eta>0$ such that if $F\in \EE_{2,\eta}$, then  
$$
\overline{\Span}\bigl\{e_\lambda F: \lambda\in\mathbb C\bigr\}=\EE F\cap \FF.
$$
\label{thm2}
\end{theorem}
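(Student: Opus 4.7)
The plan is to deduce this from Theorem~\ref{mainthm2}. Given $F \in \EE_{2,\eta}$, the task is to produce $G \in \EE_{2}$ of completely regular growth, with $\inf h_G > 0$, such that $(FG \cdot \EE) \cap \FF_{(\alpha)} = FG \cdot \CC$ for some $\alpha < 1$; Theorem~\ref{mainthm2} then closes the argument.

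Fix $\alpha \in (2\eta/\pi, 1)$, so that every $F \in \EE_{2,\eta}$ automatically lies in $\FF_{(\alpha)}$. By Jensen's formula the upper Beurling--Landau density $D^{+}(Z(F))$ is bounded by $\eta/\pi$, which is forced to be very small by the assumed smallness of $\eta$. The heart of the proof is to embed $Z(F)$ into a set $\Lambda$ of density exactly $\alpha$ whose reproducing-kernel system $\{k_\lambda\}_{\lambda \in \Lambda}$ is complete and minimal in $\FF_{(\alpha)}$. I would achieve this by starting from a square lattice $L$ of density $\alpha$ and displacing a carefully chosen sub-collection of its points onto the zeros of $F$. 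Provided $\eta$ is small enough that these displacements fit into a suitable Carleson class, Seip--Wallsten-style stability theorems for Fock-space sampling and interpolation guarantee that the perturbed set $\Lambda$ inherits the complete-minimal property from $L$, with a generating function $S_\Lambda$ of order $2$, completely regular growth, and constant indicator $\pi\alpha/2$.

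Writing $\Lambda = Z(F) \sqcup \Lambda_{*}$, let $G_{0}$ be the genus-$2$ canonical product over $\Lambda_{*}$. Since $\Lambda_{*}$ is a perturbed lattice, $G_{0}$ is completely regular with positive constant indicator. By construction $FG_{0}$ and $S_\Lambda$ share the same zero set, so they differ by a factor $e^{Q}$ for some polynomial $Q$ of degree at most $2$; setting $G := G_{0}\,e^{-Q}$ makes $FG = S_\Lambda$. For small enough $\eta$ the coefficients of $Q$ (governed by the gap between $h_F$, $h_{G_{0}}$, and $\pi\alpha/2$) are small enough that the correction does not destroy $\inf h_G > 0$. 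The multiplier condition is then immediate from the complete-minimal property of $\Lambda$: if $h$ is entire with $S_\Lambda h \in \FF_{(\alpha)}$, then the subspace of $\FF_{(\alpha)}$-multiples of $S_\Lambda$ being one-dimensional forces $h$ to be a constant.

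The main obstacle is the construction of $\Lambda$: absorbing an arbitrary, possibly highly irregular, zero set $Z(F)$ into a complete-minimal system for $\FF_{(\alpha)}$ whose generator is completely regular, quantitatively tightly enough that the compensating exponential $e^{-Q}$ does not spoil the positivity $\inf h_G > 0$. The universal threshold $\eta$ is determined exactly by the smallness required both of $D^{+}(Z(F))$ and of the displacement scale in the perturbation of $L$ for the Seip--Wallsten machinery to be applicable.
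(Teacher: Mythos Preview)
Your reduction to Theorem~\ref{mainthm2} breaks down at two places, and the paper's proof of Theorem~\ref{thm2} takes an entirely different route.

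\textbf{First gap: density of zeros.} The claim that Jensen's formula bounds the upper Beurling--Landau density $D^{+}(Z(F))$ by $\eta/\pi$ is false. Jensen controls only the number of zeros in discs centred at the origin; it says nothing about clustering far from the origin. In fact one can build $F$ of order $1$ (hence $F\in\EE_{2,\eta}$ for every $\eta>0$) whose zeros accumulate so that $D^{+}(Z(F))=\infty$: take zeros at the points $2^{k}+j\,2^{-k}$, $1\le j\le 2^{k}$, $k\ge1$. With $D^{+}(Z(F))$ unbounded, there is no hope of realising $Z(F)$ inside a small perturbation of a lattice of density $\alpha$, so the Seip--Wallsten stability step never gets off the ground.

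\textbf{Second gap: regularity of $G$.} Even if one could embed $Z(F)$ in a perturbed lattice $\Lambda$ with generating function $S_\Lambda$ of completely regular growth, your $G$ satisfies $FG=S_\Lambda$, i.e.\ $G=S_\Lambda/F$. If both $S_\Lambda$ and $G$ were of completely regular growth, then writing $\log|F|=\log|S_\Lambda|-\log|G|$ shows that $F$ itself would be of completely regular growth. But Theorem~\ref{thm2} makes no such assumption on $F$; the whole point (stated explicitly in the paper just before Theorem~\ref{thm2}) is that ``there are no more regularity restrictions on the zeros''. So for any $F\in\EE_{2,\eta}$ that is \emph{not} of completely regular growth, your $G$ cannot satisfy the hypothesis of Theorem~\ref{mainthm2}. (Incidentally, your assertion that ``$\Lambda_{*}$ is a perturbed lattice'' is already wrong: $\Lambda_{*}$ is the lattice $L$ with a subset \emph{deleted}, not displaced.) This is precisely why Corollary~\ref{cor}, which \emph{is} deduced from Theorem~\ref{mainthm2}, needs the extra hypothesis that $F$ be of completely regular growth.

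\textbf{What the paper actually does.} The paper proves Theorem~\ref{thm2} by a direct contradiction argument, not via Theorem~\ref{mainthm2}. One takes $H(z)=\sigma((1-\varepsilon)^{1/2}z)$ for suitable $\varepsilon$, assumes $FV\perp[F]_\FF$ for some $V$, and sets $U=\sigma_0\cdot\mathcal I(FV,F)$. Lemma~\ref{L2} gives $U\in\EE_{2,\pi\varepsilon/2}$; arguing as in Lemma~\ref{L6} produces an entire $S\in\EE_{2,\pi\varepsilon/2}$ with $UV/\sigma_0=\mathcal I(FV,FV)+S$. One then shows, combining translations $\TT_\alpha$ with Lemma~\ref{L26} (zeros of $UV$ near lattice points) and the uniqueness Lemma~\ref{L24}, that $S$ must be a polynomial, and that this is impossible. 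The smallness threshold $\eta$ is fixed by the constant $\beta$ in Lemma~\ref{L24}, not by any perturbation-theoretic consideration.
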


Thus, the situation here could be compared to that of the cyclicity/invertibility problem in the Bergman space. 
Invertibility does not imply cyclicity there \cite{BH}; if we impose additional growth restrictions, then invertibility 
does imply cyclicity. (Stronger lower estimates also imply cyclicity \cite{BI}). 
The main difference is that the Bergman space cyclic/invertible functions $f$ are zero free and one works with harmonic $\log|f|$ 
while in our situation the Fock space functions have a lot of zeros which makes the problem much more complicated.

The Fock space does not possess a Riesz basis of reproducing kernels. Instead, we have the system $\mathcal K=\{k_w\}_{w\in \ZZ_0}$ which is complete and minimal in $\mathcal F$. Here and later on $\ZZ=\mathbb Z+i\mathbb Z\subset\mathbb C$, $\ZZ_0=\ZZ\setminus\{0\}$. 
Let $\sigma$ be the Weierstrass sigma function associated to $\ZZ$, $\sigma_0(z)=\sigma(z)/z$. For more information about these functions see Section~\ref{sect2}. The system 
$\{g_w\}_{w\in \ZZ_0}$, $g_w=\sigma_0/(\sigma_0'(w)(\cdot-w))$, is biorthogonal to $\mathcal K$. One of our main technical tools to get 
the completeness results is the following Parseval-type relation:
if $F_1,F_2\in\FF$, $\mu\in Z(F_2)\setminus \ZZ_0$, then
\begin{multline*}
\sum_{w\in \ZZ_0}\langle F_2,k_w\rangle\cdot \langle g_w,F_1\rangle\Bigl[ \frac1{z-w}+\frac1{w-\mu} \Bigr] \\
= \frac{\langle F_2,F_1\rangle}{z}
+\Bigl\langle \frac{F_2}{\cdot-\mu}, F_1 \Bigr\rangle+o(|z|^{-1}),\qquad |z|\to\infty,\,z\in\CC\setminus\Omega, 
\end{multline*}
for some thin set $\Omega$. 
Furthermore, we study related continuous Cauchy transforms corresponding to pairs of Fock space functions, whose asymptotics gives their scalar product. 
In particular, given $F_1,F_2\in\FF$, we have
\begin{multline*}
\frac{1}{\sigma_0(z)}\Bigl\langle \frac{\sigma_0(z)F_1-F_1(z)\sigma_0}{z-\cdot},F_2\Bigr\rangle\\=
\int_\CC \frac{F_2(\zeta)\overline{F_1(\zeta)}}{z-\zeta}\,e^{-\pi|\zeta|^2}\,dm_2(\zeta)-
\frac{F_2(z)}{\sigma_0(z)}\int_\CC \frac{\sigma_0(\zeta)\overline{F_1(\zeta)}}{z-\zeta}\,e^{-\pi|\zeta|^2}\,dm_2(\zeta)\\=\frac{\langle F_2,F_1\rangle}{z}+o(|z|^{-1}),\qquad |z|\to\infty,\,z\in\CC\setminus\Omega, 
\end{multline*}
for some thin set $\Omega$. Finally, we establish and use a number of uniqueness results on the Fock space functions outside thin sets (thin lattice sets). 

The plan of the paper is as follows. In Section~\ref{sect2} we introduce some notations and prove three uniqueness results for functions in the Fock space. 
Section~\ref{sect3} contains several auxiliary results on interpolation formulas and the scalar product in the Fock space. 
Theorem~\ref{mainthm2} together with some auxiliary lemmas is proved in Section~\ref{sect4}.  
Theorem~\ref{thm2} and Corollary~\ref{cor} are proved in Section~\ref{sect5}. Finally, Theorem~\ref{cexTh} is proved in 
Section~\ref{sect6} using techniques that are quite different from those in the previous part of the paper.

\subsection*{Notations.} Throughout this paper the notation $U(x)\lesssim V(x)$ means that there is a constant $C$ such that
$U(x)\leq CV(x)$ holds for all $x$ in the set in question, $U, V\geq 0$. We write $U(x)\asymp V(x)$ if both $U(x)\lesssim V(x)$ and
$V(x)\lesssim U(x)$.

\subsubsection*{\bf Acknowledgments}
We thank Anton Baranov for numerous useful discussions.

\section{Notations and some uniqueness results for the Fock space}
\label{sect2}

In this section after introducing some notations, we establish three uniqueness results for the Fock space functions. 

Given $\alpha\in\CC$, the time-frequency shift operator $\TT_\alpha$ given by 
$$
(\TT_\alpha F)(z)=e^{\pi\bar\alpha z-\frac\pi2|\alpha|^2}F(z-\alpha)
$$
is unitary on the Fock space $\FF$.

Put $d\nu(z)=e^{-\pi|z|^2}\,dm_2(z)$.

Given $F\in\EE$ we denote by $Z(F)$ its zero set. 

It is known \cite[Theorem 5, Chapter 3]{Lev} that if $F,G\in\EE_2$, $F$ is of completely regular growth, then 
$$
h_{FG}=h_F+h_G.
$$

Together with $\FF$ we consider its subspace 
$$
\FF_0=\{F\in\EE:\PP F\in\FF\}.
$$
Given $F\in\FF_0$, denote
$$
[F]_\FF=\overline{\Span}\{\PP F\}.
$$

Following \cite{BBB1} we say that a measurable subset of $\CC$ is thin if it is the union of a measurable set $\Omega_1$ of zero (area) density,
$$
\lim_{R\to\infty}\frac{m_2(\Omega_1\cap D(0,R))}{R^2}=0,
$$
and a measurable set $\Omega_2$ such that 
$$
\int_{\Omega_2}\frac{dm_2(z)}{(|z|+1)^2\log(|z|+2)}<\infty.
$$

The union of two thin sets is thin. If $\Omega$ is thin, then its lower density 
$$
\liminf_{R\to\infty}\frac{m_2(\Omega\cap D(0,R))}{R^2}
$$
is zero. In particular, $\CC$ is not thin. If $\Omega$ is thin, then its translations $z+\Omega$ are thin, $z\in\CC$.

We start with the following Liouville type result. Although we do not use it directly in the paper, it helps us to better understand how sparse thin sets are with respect to the small value 
sets of the Fock space functions. The lemma was originally formulated in \cite[Lemma 4.2]{BBB1}. A corrected proof is given in \cite{BBB1a}. Here we give an alternative proof. 

\begin{lemma} Let $F$ be an entire function of finite order, bounded on $\CC\setminus \Omega$ for some thin set $\Omega$. 
Then $F$ is a constant. 
\label{L40}
\end{lemma}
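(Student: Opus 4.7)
\emph{Proof plan.} My strategy is to apply the Ahlfors--Carleman distortion theorem to the unbounded connected components of the set where $|f|$ exceeds its bound on $\CC\setminus\Omega$, and derive a contradiction from the thinness of $\Omega$. Set $M=\sup_{\CC\setminus\Omega}|f|$, which is finite by hypothesis, and consider the open set $V=\{z\in\CC:|f(z)|>M\}\subseteq\Omega$. The maximum principle applied to the subharmonic function $|f|$ rules out any bounded connected component of $V$, since on such a component $|f|=M$ on the boundary would force $|f|\le M$ inside, contradicting $|f|>M$. Assuming for contradiction that $V\ne\emptyset$, I would fix an unbounded component $U\subseteq V\subseteq\Omega$ and a point $z_0\in U$.

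Next, the harmonic function $u=\log(|f|/M)$ is positive on $U$ and continuously vanishes on $\partial U\cap\CC$, so the Ahlfors--Carleman distortion theorem yields
\[
\log\max_{z\in U,\,|z|=r}u(z)\ \ge\ \pi\int_{|z_0|}^{r}\frac{dt}{t\,\Theta_U(t)} + O(1),
\]
where $\Theta_U(t)$ is the angular measure of $U\cap\{|z|=t\}$. Combined with the finite-order bound $\log|f(z)|\le C|z|^{\rho}$ for some $\rho<\infty$, this gives
\[
\int_{r_0}^{r}\frac{dt}{t\,\Theta_U(t)}\ \le\ \frac{\rho}{\pi}\log r + O(1),
\]
and by the Cauchy--Schwarz inequality with respect to the measure $dt/t$, applied to $\Theta_U^{1/2}$ and $\Theta_U^{-1/2}$, the dual bound
\[
\int_{r_0}^{r}\frac{\Theta_U(t)\,dt}{t}\ \ge\ \frac{(\log r)^{2}}{(\rho/\pi)\log r+O(1)}\ \ge\ \frac{\pi}{\rho}\log r - O(1).
\]

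To close the argument, I would show under the thinness hypothesis that $\int_{r_0}^{r}\Theta_\Omega(t)\,dt/t=o(\log r)$, which together with $\Theta_U\le\Theta_\Omega$ contradicts the previous lower bound. Writing $\Omega=\Omega_1\cup\Omega_2$: for the zero-density piece $\Omega_1$, integration by parts on $\mu(t)=m_2(\Omega_1\cap D(0,t))$ combined with $\mu(t)/t^{2}\to 0$ yields the $o(\log r)$ bound; for the integrable piece $\Omega_2$, I would split the integral at a scale $A=A(r)$ with $A\to\infty$ and $\log A=o(\log r)$, bounding the part on $[r_0,A]$ trivially by $2\pi\log A$, and the part on $[A,r]$ by $\log r\cdot\int_{A}^{\infty}\Theta_{\Omega_2}(t)\,dt/(t\log t)$, which tends to zero by the finiteness of $\int_{\Omega_2}dm_2(z)/((|z|+1)^{2}\log(|z|+2))$.

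The main obstacle is the delicate balance of $\log r$ orders in the concluding step: the Ahlfors lower bound and the thinness upper bound are both of order $\log r$, so the contradiction requires that the upper bound be genuinely $o(\log r)$, which in turn forces a careful joint use of the zero-density piece and the integrability piece of the thinness hypothesis. A subsidiary technicality is to verify that the Ahlfors--Carleman distortion theorem applies in the form used here when $U\cap\{|z|=t\}$ consists of several arcs rather than a single one; this is handled either by an appropriate convention for $\Theta_U(t)$ or by passing to a simply connected subdomain of $U$ containing $z_0$ and accumulating at $\infty$.
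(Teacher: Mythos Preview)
Your proof plan is correct and takes a genuinely different route from the paper's argument.

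The paper proceeds by a discrete harmonic-measure iteration: after translating so that $u(0)=1$, it considers the connected component $O^R$ of $\widetilde\Omega\cap D(0,R)$ containing $0$ and uses a radial Hall--{\O}ksendal lemma on $(1+\varepsilon)$-adic annuli to show that, on every annulus where $\widetilde\Omega$ has small area, the maximum of the harmonic measure $\psi(r)$ drops by a fixed factor. Combined with the polynomial bound $\psi(1)\ge aR^{-N}$ this forces the set $\mathcal N$ of ``thick'' annuli to satisfy $\card([3^s,3^{s+1})\cap\mathcal N)\ge 3^{s-1}$, while the thinness hypothesis gives $\sum_{n\in\mathcal N}1/n<\infty$, a contradiction.

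Your argument replaces this iteration by a single application of the Tsuji--Carleman inequality to the positive harmonic function $u=\log(|f|/M)$ on the unbounded component $U\subset\Omega$, yielding $\int_{r_0}^r dt/(t\Theta_U(t))\le (\rho/\pi)\log r+O(1)$; Cauchy--Schwarz then gives the lower bound $\int_{r_0}^r\Theta_U(t)\,dt/t\ge (\pi/\rho)\log r-O(1)$, which you contradict by showing directly from the two pieces of the thinness definition that $\int_{r_0}^r\Theta_\Omega(t)\,dt/t=o(\log r)$. This is more streamlined: the dyadic combinatorics and Hall's lemma are absorbed into the classical growth inequality, and the contradiction is immediate once the two $\log r$-scale estimates are on the table. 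The paper's route, in exchange, uses only an elementary harmonic-measure estimate rather than the Tsuji--Carleman machinery.

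One remark on the technical point you flag: passing to a simply connected subdomain of $U$ does \emph{not} directly work, since $u$ need not vanish on the new boundary $\partial U'\cap U$. However, the Carleman differential inequality already holds for general (possibly multiply connected) domains with $\Theta_U(t)$ equal to the total angular measure of $U\cap\{|z|=t\}$: applying Wirtinger's inequality on each arc of length $\ell_j\le\Theta_U(t)$ gives a constant $(\pi/\ell_j)^2\ge(\pi/\Theta_U(t))^2$, so summing over arcs preserves the bound. Thus your first suggested resolution (the convention for $\Theta_U$) is the correct one.
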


\begin{proof}
Suppose that $F$ is not a constant and that 
\begin{equation}
\log|F(z)|=O(|z|^N),\qquad |z| \to\infty,
\label{eq1}
\end{equation}
for some $N<\infty$.
We can find $w\in\CC$ and $c\in\mathbb R$ such that the subharmonic function $u$,
$$
u(z)=\log|F(z-w)|+c
$$
is negative on $\CC\setminus \widetilde{\Omega}$ for some open  
thin set $\widetilde{\Omega}$, and $u(0)=1$. 
Given $R>0$, consider the connected component $O^R$ of $\widetilde{\Omega}\cap D(0,R)$ containing the point $0$. 
By the theorem on harmonic estimation \cite[VII.B.1]{Koo1}, we have
$$
1=u(0)\le \omega(0,\partial O^R\cap \partial D(0,R),O^R)\cdot \max_{|z|=R}u(z),
$$
where $\omega(z,E,O)$ is the harmonic measure at $z\in O$ of $E\subset \partial O$ with respect to the domain $O$. 
By \eqref{eq1} we obtain 
\begin{equation}
\omega(0,\partial O^R\cap \partial D(0,R),O^R)\ge aR^{-N},\qquad R\ge 1,
\label{dd4}
\end{equation}
for some $a>0$.

For some $R\ge 1$ to be chosen later on we set 
\begin{align*}
\phi(z)&=\begin{cases}
\omega(z,\partial O^R\cap \partial D(0,R),O^R),\qquad z\in O^R,\\
1,\qquad z\in \partial O^R\cap \partial D(0,R),\\
0,\qquad z\in (D(0,R)\setminus O^R)\cup(\partial D(0,R)\setminus \partial O^R),
\end{cases}\\
\psi(r)&=\max_{\partial D(0,r)}\phi.
\end{align*}

By the maximum principle, $\psi$ increases on $[0,R]$. 

We use the following radial version of Hall's lemma (attributed to {\O}ksendal in \cite[p.125]{GM}): if $E$ is a measurable subset of 
$D(0,1)\setminus D(0,1/2)$, then 
$$
\omega(0,E,D(0,1)\setminus E)\ge \delta m_2(E)
$$
for some absolute constant $\delta>0$.

Let $0<r<(1+\varepsilon)r<R$ for some $\varepsilon\in(0,1/2)$ and assume that 
$$
m_2\bigl(O^R\cap D(0,(1+\varepsilon)r)\setminus D(0,r)\bigr)\le\frac{\pi\varepsilon^2}{8}r^2.
$$
Then by Hall's lemma, applied in the discs 
$$
D(\zeta,\varepsilon r/2),\qquad \zeta\in\partial D(0,(1+\varepsilon/2)r), 
$$
we obtain 
$$
\psi(r)\le (1-\beta)\psi((1+\varepsilon)r),
$$
for some absolute constant $\beta>0$. Choose $\varepsilon>0$ in such a way that $(1+\varepsilon)^{2N}(1-\beta)=1$ and assume that $R=(1+\varepsilon)^M$ for some 
integer $M$. Put  
$$
\mathcal N=\Bigl\{n\ge 0: m_2\bigl(\widetilde{\Omega}\cap D(0,(1+\varepsilon)^{n+1})\setminus D(0,(1+\varepsilon)^n)\bigr)>\frac{\pi\varepsilon^2}{8}(1+\varepsilon)^{2n} \Bigr\},
$$
and set $\mathcal N^*_M=\mathbb Z_+\cap[0,M)\setminus \mathcal N$. 

Then 
$$
\psi((1+\varepsilon)^n)\le (1+\varepsilon)^{-2N}\psi((1+\varepsilon)^{n+1}),\qquad n\in\mathcal N^*_M.
$$
By \eqref{dd4} we obtain that 
$$
a(1+\varepsilon)^{-NM}\le \psi(1)\le (1+\varepsilon)^{-2N\card(\mathcal N^*_M)}\psi(R)=(1+\varepsilon)^{-2N\card(\mathcal N^*_M)},
$$
and, hence,
$$
M\ge 2\card(\mathcal N^*_M)-c,\qquad M\ge 0.
$$
In particular,
\begin{equation}
\card([3^s,3^{s+1})\cap\mathcal N)\ge 3^{s-2},\qquad s\ge s_0.
\label{dd7}
\end{equation}

We have $\widetilde{\Omega}=\Omega_1\cup\Omega_2$, where $\Omega_1$ and $\Omega_2$ 
are open, and
\begin{align*}
\lim_{R\to\infty}\frac{m_2(\Omega_1\cap D(0,R))}{R^2}=0,\\
\int_{\Omega_2}\frac{dm_2(z)}{(|z|+1)^2\log(|z|+2)}<\infty.
\end{align*}
Furthermore,
$$
\int_{\Omega_{2,n}}\frac{dm_2(z)}{(|z|+1)^2\log(|z|+2)}\ge \frac{c}{n},\qquad 
n\ge n_0,\,n\in\mathcal N,
$$
for some $c>0$, where $\Omega_{2,n}=\Omega_2\cap D(0,(1+\varepsilon)^{n+1})\setminus D(0,(1+\varepsilon)^n)$.
Thus,
$$
\sum_{n\in\mathcal N}\frac{1}{n}<\infty,
$$
that contradicts to \eqref{dd7}. This completes the proof.
\end{proof}

We say that a subset $A$ of the lattice $\ZZ=\mathbb Z+i\mathbb Z$ is {\it lattice thin} if for some (every) $c>0$, the set 
$$
\cup_{w\in\ZZ}D(w,c)
$$
is thin.

Let $\sigma$ be the Weierstrass sigma function associated to $\ZZ$,
$$
\sigma(z)=z\prod_{w\in\ZZ_0}\Bigl(1-\frac zw\Bigr)e^{\frac zw+\frac{z^2}{2w^2}},
$$
where $\ZZ_0=\ZZ\setminus\{0\}$. Set 
$\sigma_0(z)=\sigma(z)/z$. 
Since 
$$
|\sigma(z)|\asymp \dist(z,\ZZ)e^{(\pi/2)|z|^2},\qquad z\in\mathbb C,
$$ 
we have $\sigma\in \EE_{2,\pi/2}$, $h_\sigma\equiv \pi/2$ and
$\ZZ_0$ is a uniqueness set for $\FF$.

\begin{lemma} There exists $\beta\in(0,1)$ such that if $S\in\mathcal E_2$ and $\mathcal Z_1$ is a subset of $\ZZ_0$ 
of lower density at least $1-\beta$ 
satisfying the property
$$
\inf_{\partial D(z,\rho\log^{-2}(1+|z|))}|S|<1, 
$$
for every $\rho\in(0,1)$, $z\in \mathcal Z_1$, 
then $S$ is a constant.
\label{L24}
\end{lemma}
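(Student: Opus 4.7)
The plan is to reduce the claim to Lemma~\ref{L40}: assuming $S$ is nonconstant, I will show the hypothesis forces $|S|$ to be bounded outside a thin set, contradicting Lemma~\ref{L40}.

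First, I extract geometric information from the hypothesis. Fix $z_0 \in \ZZ_1$ and set $r_{z_0} := \log^{-2}(1+|z_0|)$. For every $\rho \in (0,1)$ the hypothesis gives a point on $\partial D(z_0, \rho r_{z_0})$ where $|S| < 1$; a compactness argument produces a connected continuum $\gamma_{z_0} \subset \overline{D(z_0, r_{z_0})}$ joining $z_0$ to $\partial D(z_0, r_{z_0})$ on which $|S| \le 1$. In particular, letting $\rho \to 0$, $|S(z_0)| \le 1$.

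Next, I convert this into a quantitative local bound via the two-constants theorem. Since $S \in \EE_2$ one has $\log|S| \le C|z_0|^2$ on $\partial D(z_0, r_{z_0})$ for some $C$. The subharmonic function $\log|S|$ satisfies $\le 0$ on $\gamma_{z_0}$ and $\le C|z_0|^2$ on the outer circle, so by the two-constants theorem in the simply connected slit domain $D(z_0, r_{z_0}) \setminus \gamma_{z_0}$, together with Beurling's projection estimate bounding the harmonic measure of the outer circle from $D(z_0, r_{z_0}/2)$ by some absolute $c_0 < 1$, we get $\log|S| \le c_0 C |z_0|^2$ on the half-disc. Iterating $O(\log|z_0|)$ times on successively halved concentric discs yields $|S| \le 2$ on a disc $D(z_0, \rho_{z_0})$ with $\rho_{z_0} \gtrsim |z_0|^{-A}$ for an absolute $A$.

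Finally, I merge the local bounds into a global one using the density $\ge 1-\beta$ of $\ZZ_1$. Let $E := \bigcup_{z_0 \in \ZZ_1} D(z_0, \rho_{z_0})$, so $|S| \le 2$ on $E$. For $w$ outside a candidate thin exceptional set, I apply the two-constants theorem on $D(w, R)$ with $\log|S| \le CR^2$ on the outer circle and $\log|S| \le \log 2$ on $E$; the density of $\ZZ_1$ combined with Hall's lemma (as in the proof of Lemma~\ref{L40}) provides a quantitative lower bound on the harmonic mass of $E$ in dyadic annuli around $w$, which for $\beta$ small enough suffices to make the harmonic measure of $\partial D(w, R)$ correspondingly small. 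This gives $|S(w)| \le C'$ off a thin set, and Lemma~\ref{L40} forces $S$ to be constant.

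The main obstacle is this last step. The discs in $E$ have polynomially small radii, so $E$ has zero area density; ensuring that Hall's lemma delivers enough harmonic mass to overcome the $\exp(C|z|^2)$ growth is delicate, and the requirement that $\beta$ be small enough is precisely what pins down the admissible range of parameters in this calibration.
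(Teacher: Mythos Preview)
The decisive gap is Step~3, and the tool you name --- Hall's lemma --- cannot close it. Hall bounds harmonic measure from below by \emph{area}, but the discs in $E$ have radii $\rho_{z_0}\asymp|z_0|^{-A}$, so $E$ has zero area density: in $D(w,R)$ with $R\asymp|w|$ one gets $m_2(E\cap D(w,R))\asymp R^{2-2A}$ and hence only $\omega(w,E,\cdot)\gtrsim R^{-2A}$, useless against $\log|S|\le CR^2$ on the outer circle. Even if one replaces area by logarithmic capacity (which does give a uniform one-scale bound $\omega\ge\gamma$), an inward iteration across $\sim\log R$ dyadic shells yields at best $\omega(w,\partial D(w,R))\lesssim R^{-c'}$ with $c'=-\log_2(1-\gamma)$; to beat $R^2$ one would need $\gamma>3/4$, and the capacity estimate gives no such large $\gamma$. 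So the reduction to Lemma~\ref{L40} does not go through as written.

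The paper bypasses Lemma~\ref{L40} entirely. It works directly with the sets $\Delta_z=\{|S|<1\}\cap D(z,r_z)$ --- so your Steps~1--2 are an unnecessary detour --- extracts from each a finite family of radial segments of total length $\asymp r_z$, and shows by an explicit logarithmic-energy computation that the rescaled union of these segments inside $D(\zeta,\delta|\zeta|)$ has logarithmic capacity bounded below by an absolute constant. This yields a uniform lower bound $\gamma>0$ on the harmonic measure (from $\zeta$, in $D(\zeta,\delta|\zeta|)$) of $\bigcup_w\overline{\Delta_w}$, and hence the single-scale inequality $\psi(r)\le(1-\gamma)\,\psi((1+\delta)r)$ for $\psi(r)=\max_{|z|=r}\log|S|$. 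Iterating this \emph{outward} against the a priori bound $\psi(r)=O(r^2)$, with $\delta$ chosen so that $(1-\gamma)(1+\delta)^2<1$, forces $\psi\le 0$; thus $|S|\le 1$ everywhere and $S$ is constant by Liouville. Any $\gamma>0$ suffices here --- the smallness of $\beta$ enters only to guarantee enough points of $\ZZ_1$ in each $D(\zeta,\delta|\zeta|)$ once $\delta$ is fixed. (As a side remark, the continuum extraction in your Step~1 is not justified by compactness alone --- a closed set meeting every concentric circle need not contain an arc from center to boundary --- but the point is moot: Beurling's projection theorem applied directly to $\{|S|\le1\}$ already yields the Step~2 bound.)
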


\begin{proof} 
For every $z\in\mathcal Z_1$ put 
$$
\Delta_z=\bigl\{ w\in D(z,\log^{-2}(1+|z|)):|S(w)|<1 \bigr\}.
$$
Then every $\Delta_z$ contains a finite family of intervals $z+e^{i\theta_{k,z}}J^k_z$ with disjoint $J^k_z\subset \mathbb R_+$ of total length $(1/2)\log^{-2}(1+|z|)$. 

Set
$$
\Omega=\mathbb C\setminus \bigcup_{z\in\mathcal Z_1}\overline{\Delta_z}.
$$
Given $z\in\CC$ and $a>0$, set 
$$
\mathcal Z^{z,a}_*=\bigl\{ w\in\mathcal Z_1: \overline{\Delta_w}\subset D(z,a)\setminus D(z,a/2) \bigr\}.
$$

Next, given $\delta\in(0,1)$ to be chosen later on, if the lower density $\mathcal Z_1$ is at least $1-\beta$ with $0<\beta\le \beta(\delta)<1$, then
\begin{equation}
\card(\mathcal Z^{z,r\delta}_*)\ge \frac\pi4 \delta^2r^2,\qquad z\in\partial D(0,r),\, r>r(\delta).
\label{nes4}
\end{equation}

Now we are going to prove that, under condition \eqref{nes4}, we have  
\begin{equation}
\omega\bigl(z,\partial\Omega\cap D(z,\delta|z|),D(z,\delta|z|)\cap\Omega\bigr)\ge  \gamma,\qquad |z|>r(\delta),
\label{nes2}
\end{equation}
for some absolute constant $\gamma>0$.

Given $z\in\CC$, set $t=\delta|z|$,
\begin{gather*}
F=\bigl\{ w\in D(0,1): z+wt\in\cup_{w\in \mathcal Z^{z,r\delta}_*} \overline{\Delta_w}\bigr\},\\
\mu=c\sum_{w\in \mathcal Z^{z,r\delta}_*}\mu_w=c\sum_{w\in \mathcal Z^{z,r\delta}_*}\sum_k \chi_{(w+e^{i\theta_{k,w}}J^k_w-z)/t}\,m,
\end{gather*}
where $m$ is one dimensional Lebesgue measure and $c$ is a normalization constant such that $\mu$ is a probability measure.

Then, under condition \eqref{nes4}, we have $c\asymp \log^2t/t$ for large $t$, and the logarithmic energy of $\mu$ is estimated below as follows: 
\begin{multline*}
-I(\mu)=-\int\int\log|\zeta_1-\zeta_2|\,d\mu(\zeta_1)\,d\mu(\zeta_2)
=-c^2\sum_{w\in \mathcal Z^{z,r\delta}_*}I(\mu_w)\\-c^2
\sum_{w\in \mathcal Z^{z,r\delta}_*}\sum_{w_1\in \mathcal Z^{z,r\delta}_*\setminus\{w\}}\int\int\log|\zeta_1-\zeta_2|\,d\mu_w(\zeta_1)\,d\mu_{w_1}(\zeta_2)
\\ \le O(1)+\frac{\log^2t}{t}\cdot \sup_{u\in D(0,1)}\sum_{w\in \mathcal Z^{z,r\delta}_*}\int\log\frac1{|\zeta-u|}\,d\mu_w(\zeta)=O(1).
\end{multline*}
Since $\supp \mu\subset F$, the logarithmic capacity of $F$ is bounded below by an absolute constant $c>0$. 
Finally, \cite[Theorem III.9.1]{GM} yields \eqref{nes2}.

Put  
$$
\psi(r)=\max_{\partial D(0,r)}\log|S|,\qquad r>0.
$$
Since $S\in\mathcal E_2$, we have 
\begin{equation}
\psi(r)=O(r^2),\qquad r\to\infty.
\label{nes3}
\end{equation}
Under condition \eqref{nes4}, by the theorem on harmonic estimation \cite[VII.B.1]{Koo1} and by \eqref{nes2}, we obtain
\begin{equation}
\psi(r)\le \psi(r+\delta r)(1-\gamma),\qquad r>r(\delta).
\label{nes1}
\end{equation}
If $\delta$ is sufficiently small, $0<\delta\le (1-\gamma)^{-1/2}-1$, then \eqref{nes3} and \eqref{nes1} imply together that $\psi\le 0$ and, hence, 
$S$ is a constant. This completes the proof. 
\end{proof}

\begin{lemma} Let $F\in\FF_0\cap \ell^\infty(\ZZ)$. Then $F$ is a constant.
\label{L22}
\end{lemma}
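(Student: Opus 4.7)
The plan is to apply Lemma~\ref{L24} to a rescaled version of $F$. Let $M = \sup_{w \in \ZZ}|F(w)|$, which is finite by hypothesis, and set $S = F/(M+1)$. Since $F \in \FF$, the reproducing-kernel estimate gives $|F(z)| \le \|F\|_{\FF}\sqrt{\pi}\,e^{\pi|z|^2/2}$, so $F \in \EE_{2,\pi/2}$ and in particular $S \in \EE_2$. By construction $|S(w)| \le M/(M+1) < 1$ for every $w \in \ZZ$.

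Let $\beta \in (0,1)$ be the constant provided by Lemma~\ref{L24}. Set
$$
\mathcal Z_1 = \bigl\{w \in \ZZ_0 : F\ \text{has no zero in}\ \overline{D(w,\log^{-2}(1+|w|))}\bigr\}.
$$
For $w \in \mathcal Z_1$ and any $\rho \in (0,1)$, the reciprocal $1/S$ is analytic on the closed disc $\overline{D(w,\rho\log^{-2}(1+|w|))}$; the maximum modulus principle applied to $1/S$ therefore yields
$$
\min_{\partial D(w,\rho\log^{-2}(1+|w|))}|S| \le |S(w)| < 1,
$$
which is exactly the pointwise hypothesis required by Lemma~\ref{L24}.

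The central task is to verify that $\mathcal Z_1$ has lower density at least $1-\beta$ in $\ZZ_0$, equivalently that the \emph{bad} set $\mathcal B = \ZZ_0 \setminus \mathcal Z_1$ of lattice points whose small discs contain a zero of $F$ has upper density at most $\beta$. Here the assumption $F \in \FF_0$ is essential: the condition $z^n F \in \FF$ for every $n$, combined with the reproducing-kernel estimate applied to $z^n F$, yields the pointwise decay $|F(z)| \le C_n(1+|z|)^{-n}e^{\pi|z|^2/2}$ for every $n$. If $\mathcal B$ had upper density exceeding $\beta$, one could pick one zero $z_w \in \overline{D(w,\log^{-2}(1+|w|))}$ for each $w \in \mathcal B$, form the canonical product $P$ built from $\{z_w\}$, and factor $F = P\cdot G$ with $G$ entire. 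Since the perturbation $\log^{-2}(1+|w|)$ is much smaller than the lattice spacing, the set $\{z_w\}$ has density close to that of $\mathcal B$; the growth of $P$ would then be comparable to that of $\sigma$ up to a deficit proportional to $1-\beta$. Combined with the super-polynomial decay bound on $F$, this forces $G$ to decay faster than any polynomial, so $G \equiv 0$ by Liouville, contradicting $F \not\equiv 0$. I expect the quantitative form of this density estimate — linking the constant $\beta$ from Lemma~\ref{L24} to the $\FF_0$ bounds on $F$ — to be the main technical obstacle.

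Once $\mathcal Z_1$ is shown to satisfy the required lower-density bound, Lemma~\ref{L24} applies directly: $S$ is constant, and therefore so is $F$.
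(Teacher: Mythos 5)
Your proposed route is very different from the paper's: the paper proves this lemma by an elementary direct estimate. Using the Lagrange interpolation formula, $z^{k}F(z)/\sigma(z)=\sum_{w\in\ZZ}\frac{w^{k}F(w)}{\sigma'(w)(z-w)}$ for every $k\ge 0$, it bounds $|F(z)|$ by $|\sigma(z)|\min_{k\ge 0}\bigl[|z|^{-k}\sum_{w\in\ZZ}|w|^{k}/|\sigma'(w)|\bigr]$ for $\dist(z,\ZZ)>1/3$, compares the sum with a Gamma-type integral, optimizes over $k$, and arrives at $|F(z)|\lesssim 1+|z|$; Liouville then finishes. No potential theory, no Lemma~\ref{L24}.

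Your argument has a genuine gap in the density step, and I do not see how to close it along the lines you sketch. You need the good set $\mathcal Z_{1}$ (lattice points with no nearby zero of $F$) to have lower density at least $1-\beta$, where $\beta\in(0,1)$ is the unspecified constant from Lemma~\ref{L24} and may well be small. To rule out a bad set of density $d>\beta$, you form a canonical product $P$ over one zero near each bad lattice point and look at $G=F/P$. But $P$ grows roughly like $\exp\bigl(d\,\pi|z|^{2}/2\bigr)$ while $F\in\FF_{0}$ only gives $|F(z)|\lesssim(1+|z|)^{-n}e^{\pi|z|^{2}/2}$, so $|G(z)|\lesssim(1+|z|)^{-n}e^{(1-d)\pi|z|^{2}/2}$. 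This forces $G\equiv 0$ only when $d\ge 1$, i.e.\ only when the bad set has essentially full density. For any $d<1$ (in particular for $d\in(\beta,1)$) the bound on $G$ is compatible with a nontrivial entire function, and indeed a Jensen-formula count shows that $F\in\FF_{0}$ alone permits zero sets of density arbitrarily close to $1$. The hypothesis $F\in\ell^{\infty}(\ZZ)$ is global and does not feed into this local zero-counting; the paper uses it globally through the interpolation series. So the claim ``density $>\beta$ leads to $G\equiv 0$'' is unjustified, and with it the applicability of Lemma~\ref{L24}.

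The part of your argument that is sound is the reduction at each good lattice point: if $F$ has no zero in $\overline{D(w,\log^{-2}(1+|w|))}$, then $1/S$ is analytic there and the maximum principle gives $\min_{\partial D(w,\rho\log^{-2}(1+|w|))}|S|\le|S(w)|<1$ for every $\rho\in(0,1)$, which is exactly the hypothesis of Lemma~\ref{L24}. But without the density bound on $\mathcal Z_{1}$, that lemma cannot be invoked, and the proof does not go through.
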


\begin{proof} By the Lagrange interpolation formula, for every $k\ge 0$, $z\in\CC\setminus\ZZ_0$ we have 
$$
\frac{z^kF(z)}{\sigma(z)}=\sum_{w\in\ZZ}\frac{w^kF(w)}{\sigma'(w)(z-w)}
$$
and, hence,
$$
\Bigl|\frac{z^kF(z)}{\sigma(z)}\Bigr|=\Bigl|\sum_{w\in\ZZ}\frac{w^kF(w)}{\sigma'(w)(z-w)}\Bigr|\le 
\sum_{w\in\ZZ}\frac{|w|^k\cdot|F(w)|}{|\sigma'(w)|\cdot|z-w|}.
$$
Therefore,
$$
|F(z)|\lesssim |\sigma(z)|\cdot \min_{k\ge 0}\Bigl[\frac1{|z|^k} \sum_{w\in\ZZ}\frac{|w|^k}{|\sigma'(w)|} \Bigr],\qquad \dist(z,\ZZ)>\frac13.
$$
Thus,
\begin{multline*}
|F(z)|\lesssim e^{\frac\pi2|z|^2}\cdot \min_{k\ge 0}\Bigl[\frac1{|z|^k} \sum_{w\in\ZZ} |w|^k e^{-\frac\pi2|w|^2}\Bigr]\\ 
\asymp e^{\frac\pi2|z|^2}\cdot \min_{k\ge 0}\Bigl[\frac1{|z|^k} \int_{\mathbb C} |w|^{k}e^{-\frac\pi2|w|^2}\,dm_2(w)\Bigr]\\
=2\pi e^{\frac\pi2|z|^2}\cdot \min_{k\ge 0}\Bigl[\frac1{|z|^k} \int_0^\infty r^{k+1}e^{-\frac\pi2r^2}\,dr\Bigr]\\
\lesssim \min_{k\ge 0}\exp\Bigl[ \frac\pi2|z|^2-k\log|z|+\frac{k+1}2\log\frac{k+1}\pi-\frac{k+1}2\Bigr]\\
\lesssim 1+|z|,\qquad \dist(z,\ZZ)>\frac13. 
\end{multline*}
It remains to use the Liouville theorem.
\end{proof}

\section{Interpolation formulas and duality in the Fock space}
\label{sect3}

In this section we establish several results on relations between interpolation formulas, expansions with respect to some fixed 
complete and minimal systems of the reproducing kernels and their biorthogonal systems, and the scalar product in the Fock space.

\begin{lemma} Let $F_1,F_2\in\FF$, $F_3\in\FF_0$. Then 
\begin{align*}
\Bigl|\int_\CC \frac{F_2(\zeta)\overline{F_1(\zeta)}}{z-\zeta}\,d\nu(\zeta) \Bigr|&=o(1),\qquad |z|\to\infty,\\
\Bigl|\int_\CC \frac{F_3(\zeta)\overline{F_1(\zeta)}}{z-\zeta}\,d\nu(\zeta) \Bigr|&=O((1+|z|)^{-1}),\qquad |z|\to\infty,\\
\Bigl|\int_\CC \frac{\sigma_0(\zeta)\overline{F_1(\zeta)}}{z-\zeta}\,d\nu(\zeta) \Bigr|&=O(|z|^{-1}\log^{1/2}|z|),\qquad |z|\to\infty.
\end{align*}
\label{LA}
\end{lemma}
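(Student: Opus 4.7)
The plan is to prove the three estimates via a common region decomposition combined with three pointwise/tail inputs. Let $R=|z|$ (taken large) and split $\CC$ into Region~I ($|\zeta|\le R/2$, so $|z-\zeta|\ge R/2$), Region~II ($|\zeta|\ge R/2$ and $|z-\zeta|\ge 1$), and Region~III ($|z-\zeta|<1$). The pointwise inputs I would assemble are: (i) the submean-value bound $|G(\zeta)|^2 e^{-\pi|\zeta|^2}\lesssim \int_{D(\zeta,1)}|G|^2\,d\nu$ valid for every entire $G$, yielding for $G\in\FF$ that the tail $\phi_G(\rho):=\int_{|\zeta|\ge\rho}|G|^2\,d\nu$ tends to zero and that $|G(\zeta)|^2 e^{-\pi|\zeta|^2}\lesssim \phi_G(|\zeta|-1)$; (ii) for $F_3\in\FF_0$ (applying (i) to $\zeta^k F_3\in\FF$), $\phi_{F_3}(\rho)\lesssim_k \rho^{-2k}$ for every $k\ge 1$; (iii) from Legendre's relation ($\eta_1=\pi/2$, $\eta_i=-i\pi/2$) for the Weierstrass sigma on the Gaussian lattice, $|\sigma(\zeta)|^2 e^{-\pi|\zeta|^2}$ is doubly $\ZZ$-periodic and hence bounded, so $|\sigma_0(\zeta)|^2 e^{-\pi|\zeta|^2}\lesssim (1+|\zeta|)^{-2}$.

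With these in hand, the first two estimates are essentially immediate. Region~I contributes $(2/R)\int_{|\zeta|\le R/2}|\cdot\,\bar F_1|\,d\nu=O(1/R)$ via Cauchy--Schwarz and the global $L^1$ bound on the integrand. On Regions~II and~III I would factor $|z-\zeta|^{-1}$ symmetrically and apply Cauchy--Schwarz; in Case~1 the decay $\phi_{F_1}(R/2),\phi_{F_2}(R/2)\to 0$ gives $o(1)$, and in Case~2 the bound $\phi_{F_3}(\rho)\lesssim \rho^{-2}$ (taking $k=1$) gives $O(1/R)$.

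For the third estimate the global $L^1$ mass of $|\sigma_0|^2 e^{-\pi|\cdot|^2}$ is infinite, so one must argue region by region. On Region~I, Cauchy--Schwarz with weight $d\nu$ gives $|T_3^{\mathrm{I}}|\le (2/R)\bigl(\int_{|\zeta|\le R/2}|\sigma_0|^2\,d\nu\bigr)^{1/2}\sqrt\pi\,\|F_1\|$; since the inner integral is $\lesssim \int_0^{R/2} r\,dr/(1+r)^2\asymp \log R$, this yields $O(\sqrt{\log R}/R)$. For Region~II I would apply Cauchy--Schwarz with the $|z-\zeta|^{-2}$ weight on the $\sigma_0$ factor and plain $d\nu$ on the $F_1$ factor; splitting the $\zeta$-integration into $R/2\le|\zeta|\le 2R$ (where $(1+|\zeta|)^{-2}\asymp R^{-2}$, so $\int_{1\le|z-\zeta|\le 3R} dm_2/|z-\zeta|^2=O(\log R)$ gives total $O(\log R/R^2)$) and $|\zeta|\ge 2R$ (where $|z-\zeta|\gtrsim|\zeta|$, so the integrand is $\lesssim |\zeta|^{-4}$ with total $O(R^{-2})$) bounds $\int(1+|\zeta|)^{-2}|z-\zeta|^{-2}\,dm_2$ by $O(\log R/R^2)$, and hence $|T_3^{\mathrm{II}}|=O(\sqrt{\log R}/R)$. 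Region~III is handled by the pointwise bounds $|\sigma_0(\zeta)|^2 e^{-\pi|\zeta|^2}\lesssim R^{-2}$ and $|F_1|^2 e^{-\pi|\cdot|^2}\lesssim \|F_1\|^2$ combined with the integrability of $|z-\zeta|^{-1}$ on $D(z,1)$, giving $O(1/R)$.

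The main obstacle is the third integral: $|\sigma_0|^2 e^{-\pi|\cdot|^2}$ has exactly the borderline decay $(1+|\zeta|)^{-2}$, which fails to be in $L^1(\CC)$ by a logarithmic factor, so the $\log^{1/2}|z|$ in the conclusion is genuine. Accordingly one must distribute the kernel $1/(z-\zeta)$ between the two Cauchy--Schwarz factors differently in Regions~I and~II (plain $d\nu$ vs.\ $|z-\zeta|^{-2}\,d\nu$) to keep every auxiliary integral convergent while capturing the correct $\sqrt{\log R}/R$ rate.
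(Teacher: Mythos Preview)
Your proof is correct and, for the key third estimate, essentially identical to the paper's: both apply Cauchy--Schwarz against $F_1$ with the pointwise bound $|\sigma_0(\zeta)|^2e^{-\pi|\zeta|^2}\lesssim(1+|\zeta|)^{-2}$, reducing matters to $\int_{\CC\setminus D(z,1)}(1+|\zeta|^2)^{-1}|z-\zeta|^{-2}\,dm_2(\zeta)=O(|z|^{-2}\log|z|)$ plus an $O(|z|^{-1})$ near-diagonal term.

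For the first two estimates your three-region $L^2$-tail argument is slightly more elaborate than the paper's. The paper simply writes $F_2\overline{F_1}\,d\nu=\phi\,dm_2$ with $\phi\in L^1\cap C^1$ and $\phi(\zeta)\to 0$ at infinity, splits $\CC$ into $D(z,R)$ and its complement for an arbitrary $R$, and gets $\|\phi\|_{L^1}/R+R\cdot o(1)$; letting $|z|\to\infty$ then $R\to\infty$ gives the result (the second estimate is stated as ``analogous''). Your approach trades this $L^1/L^\infty$ balancing for Cauchy--Schwarz with $L^2$ tails $\phi_G(\rho)$, which requires the extra submean-value input~(i) but unifies all three cases under a single decomposition. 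Both routes are standard; yours is a bit more systematic, the paper's a bit shorter.
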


\begin{proof} We use that if $F\in\FF$, then $|F(z)|=o(e^{\pi|z|^2/2})$, $|z|\to\infty$. Furthermore, 
$F_2(\zeta)\overline{F_1(\zeta)}\,d\nu(\zeta)=\phi(\zeta)\,dm_2(\zeta)$ 
with $\phi\in L^1(\CC)\cap C_0(\CC)$.  
Therefore, for every $R>0$, we have
\begin{multline*}
\Bigl|\int_\CC \frac{F_2(\zeta)\overline{F_1(\zeta)}}{z-\zeta}\,d\nu(\zeta) \Bigr| \\ 
\le \Bigl| \int_{\CC\setminus D(z,R)} \frac{\phi(\zeta)\,dm_2(\zeta)}{z-\zeta}\Bigr|
 +\Bigl| \int_{D(z,R)} \frac{\phi(\zeta)\,dm_2(\zeta)}{z-\zeta}\Bigr| 
\\ \lesssim \frac{\|\phi\|_{L^1(\CC)}}{R}+R\cdot o(1),\qquad |z|\to\infty.
\end{multline*}
The proof of the second inequality is analogous. 

To prove the third inequality, we verify that
\begin{multline*}
\Bigl|\int_\CC \frac{\sigma_0(\zeta)\overline{F_1(\zeta)}}{z-\zeta}\,d\nu(\zeta) \Bigr| \\ 
\le \Bigl| \int_{\CC\setminus D(z,1)} \frac{\sigma_0(\zeta)\overline{F_1(\zeta)}\,dm_2(\zeta)}{z-\zeta}\Bigr|
 +\Bigl| \int_{D(z,1)} \frac{\sigma_0(\zeta)\overline{F_1(\zeta)}\,dm_2(\zeta)}{z-\zeta}\Bigr| 
\\ \lesssim 
\|F_1\|_{L^2(\CC)}\Bigl( \int_{\CC\setminus D(z,1)} \frac{dm_2(\zeta)}{(1+|\zeta|^2)|z-\zeta|^2}\Bigr)^{1/2}
 +o\Bigl(\frac{1}{|z|}\Bigr)\\=O(|z|^{-1}\log^{1/2}|z|),\qquad |z|\to\infty.
\end{multline*}
\end{proof}

Given $F\in\FF$, $z\in\mathbb C$, set
$$
\mathfrak A(F,z)(\zeta)=\frac{F(\zeta)\sigma_0(z)-F(z)\sigma_0(\zeta)}{z-\zeta}, \qquad \zeta\in\mathbb C.
$$
Then 
$$
\mathfrak A(F,z)=\sigma_0(z)\frac{F-F(z)}{z-\cdot}+F(z)\frac{\sigma_0(z)-\sigma_0}{z-\cdot}\in\FF.
$$

Given $F_1,F_2\in\FF$, set
\begin{multline*}
\mathcal I(F_1,F_2)(z)=\frac{1}{\sigma_0(z)}\langle \mathfrak A(F_2,z),F_1\rangle\\ =
\int_\CC \frac{F_2(\zeta)\overline{F_1(\zeta)}}{z-\zeta}\,d\nu(\zeta)-
\frac{F_2(z)}{\sigma_0(z)}\int_\CC \frac{\sigma_0(\zeta)\overline{F_1(\zeta)}}{z-\zeta}\,d\nu(\zeta).
\end{multline*}
Then $\sigma_0\cdot\mathcal I(F_1,F_2)\in\mathcal E$.

The following result is contained in the proof of Lemma~4.3 of \cite{BBB1}.

\begin{lemma} Let $F_1,F_2\in\FF$. Then 
$$
\mathcal I(F_1,F_2)(z)=\frac{\langle F_2,F_1\rangle}{z}+o(|z|^{-1}),\qquad |z|\to\infty,\,z\in\CC\setminus\Omega, 
$$
for some thin set $\Omega$.
\label{L15}
\end{lemma}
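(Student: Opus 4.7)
The plan is to recombine the two Cauchy integrals in the definition of $\mathcal{I}(F_1,F_2)(z)$ into a single absolutely convergent integral, extract the leading term $\langle F_2,F_1\rangle/z$, and then control the remainder off a thin set using Lemma~\ref{LA} together with pointwise thin-set estimates for $F_2$ and $\sigma_0$.

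The first step exploits a simple cancellation: setting $\beta(z)=F_2(z)/\sigma_0(z)$, the numerator $F_2(\zeta)-\beta(z)\sigma_0(\zeta)$ vanishes at $\zeta=z$, so the quotient $(F_2-\beta(z)\sigma_0)/(z-\zeta)=\mathfrak{A}(F_2,z)(\zeta)/\sigma_0(z)$ is entire in $\zeta$ and lies in $\FF$. Combining the two terms of $\mathcal{I}$ therefore yields the single absolutely convergent integral
\[
\mathcal{I}(F_1,F_2)(z) = \int_\CC \frac{F_2(\zeta) - \beta(z)\sigma_0(\zeta)}{z-\zeta}\,\overline{F_1(\zeta)}\,d\nu(\zeta),
\]
and a direct algebraic manipulation (multiply the integrand by $z$ and subtract $F_2(\zeta)$) gives
\[
z\mathcal{I}(F_1,F_2)(z) - \langle F_2,F_1\rangle = \int_\CC \frac{\zeta F_2(\zeta) - z\beta(z)\sigma_0(\zeta)}{z-\zeta}\,\overline{F_1(\zeta)}\,d\nu(\zeta),
\]
whose integrand again has numerator vanishing at $\zeta=z$ and which equals $\bigl[z\mathfrak{A}(F_2,z)/\sigma_0(z) - F_2\bigr]\overline{F_1}$, a product of two $\FF$-functions. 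The lemma is reduced to showing that this last integral is $o(1)$ as $|z|\to\infty$ off a thin set.

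Next I would split $\CC$ into the inner disc $|\zeta|<|z|/2$, the critical annulus $|z|/2\le|\zeta|\le 2|z|$, and the outer region $|\zeta|>2|z|$, and estimate the contribution of each region separately. On the inner and outer regions the Cauchy kernel is of order $1/|z|$ and $1/|\zeta|$ respectively, and the contribution is controlled by the $L^1$-tail decay of $F_2\overline{F_1}\,d\nu$, the estimate $|\sigma_0(\zeta)|\asymp e^{\pi|\zeta|^2/2}/|\zeta|$ off a lattice thin neighbourhood of $\ZZ_0$, and the bound of Lemma~\ref{LA} for the Cauchy transform of $\sigma_0\overline{F_1}\,d\nu$. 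The potentially large factor $z\beta(z)$ is kept in check by the thin-set bound $|\beta(z)|=o(|z|)$, obtained from $|F_2(z)|e^{-\pi|z|^2/2}\to 0$ off a thin set (the sublevel sets of $|F_2|^2 e^{-\pi|\cdot|^2}$ have finite area) combined with the same sharp asymptotic for $\sigma_0$.

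The main obstacle is the critical annulus, where the Cauchy kernel has a near-singularity and each of the summands $\zeta F_2/(z-\zeta)$ and $z\beta(z)\sigma_0/(z-\zeta)$ is large individually; only their difference is small, thanks to the vanishing of the numerator at $\zeta=z$. To exploit this cancellation I would Taylor-expand the combined numerator around $\zeta=z$ and apply Cauchy--Schwarz in $\FF$ to dominate the annular contribution by the product of the $\FF$-norm of $\mathfrak{A}(F_2,z)/\sigma_0(z)$ and $\|F_1\|_\FF$, tracking the $z$-dependent constants via the thin-set asymptotics for $\sigma_0$ and $F_2$. The exceptional thin set $\Omega$ is taken as the union of the lattice neighbourhood where $|\sigma_0|$ is abnormally small, the thin set where $|F_2(z)|e^{-\pi|z|^2/2}\to 0$ fails, and any exceptional set required for the refined Cauchy-transform asymptotics; since a finite union of thin sets is thin, this global calibration produces the claimed $o(|z|^{-1})$ estimate on $\CC\setminus\Omega$.
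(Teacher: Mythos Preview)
Your algebraic manipulations in the first two displayed identities are correct, and the identification of the remainder integrand with $z\mathfrak A(F_2,z)/\sigma_0(z)-F_2$ is right. The gap is in the quantitative control of the remainder.

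You propose, on the inner and outer regions, to treat the two pieces $\int \zeta F_2\overline{F_1}/(z-\zeta)\,d\nu$ and $z\beta(z)\int \sigma_0\overline{F_1}/(z-\zeta)\,d\nu$ separately, using the stated thin-set bound $|\beta(z)|=o(|z|)$. That bound is far too weak: combined with the Lemma~\ref{LA} estimate $|I_2(z)|=O(|z|^{-1}\log^{1/2}|z|)$ it yields only $|z\beta(z)I_2(z)|=o(|z|\log^{1/2}|z|)$, not $o(1)$. Even the sharpest bound obtainable by your method is roughly $|\beta(z)|=O(1)$ off a thin set (Chebyshev on $|F_2|^2e^{-\pi|\cdot|^2}\in L^1$ gives $|F_2(z)|\lesssim e^{\pi|z|^2/2}/(|z|\log^{1/2}|z|)$ off a thin set, while excluding $\{\dist(z,\ZZ_0)<\log^{-1/2}|z|\}$ gives $|\sigma_0(z)|\gtrsim e^{\pi|z|^2/2}/(|z|\log^{1/2}|z|)$), and this still leaves $|z\beta(z)I_2(z)|=O(\log^{1/2}|z|)$. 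The point is that neither $zI_1(z)-\langle F_2,F_1\rangle$ nor $z\beta(z)I_2(z)$ is $o(1)$ off a thin set individually; only their difference is, and the cancellation is not confined to the annulus. Any argument that decouples the two Cauchy integrals on the inner/outer regions is therefore doomed.

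Your annulus plan also needs more: applying Cauchy--Schwarz gives a bound by $\|\Psi_z\|_\FF\,\|F_1\|_\FF$ with $\Psi_z=z\mathfrak A(F_2,z)/\sigma_0(z)-F_2$ (note the extra factor $z$ and the subtraction of $F_2$, which your sketch omits), and you give no reason why $\|\Psi_z\|_{\FF}$, or even its restriction to the annulus, tends to $0$ off a thin set. For comparison, the paper does not supply its own proof here but cites \cite[Lemma~4.3]{BBB1}; the argument there works with the discrete interpolation expansion over $\ZZ_0$ (in the spirit of Lemma~\ref{L1}) and controls the tails of the resulting series directly, rather than estimating the continuous Cauchy integrals region by region.
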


The system $\mathcal K=\{k_w\}_{w\in \ZZ_0}$ is a complete and minimal system in $\mathcal F$, and the system 
$\{g_w\}_{w\in \ZZ_0}$, $g_w=\sigma_0/(\sigma_0'(w)(\cdot-w))$, is biorthogonal to $\mathcal K$, see \cite{Bel,BBB1}.

Lemmas~2.3 and 4.1 of \cite{BBB1} give us the following result:

\begin{lemma} Let $F_1,F_2\in\FF$. We define 
$$
c_w=\langle F_2,k_w\rangle\cdot \langle g_w,F_1\rangle=
\frac{F_2(w)}{\sigma'_0(w)}\Bigl\langle \frac{\sigma_0}{\cdot-w},F_1\Bigr\rangle,\qquad w\in\ZZ_0.
$$
Then 
\begin{equation}
\sum_{w\in \ZZ_0}\frac{|c_w|^2}{\log(1+|w|)}<\infty,
\label{nw3}
\end{equation}
and for every $\mu\in Z(F_2)\setminus \ZZ_0$ we have  
$$
\sum_{w\in \ZZ_0}c_w\Bigl[ \frac1{z-w}+\frac1{w-\mu} \Bigr] 
= \mathcal I(F_1,F_2)(z) 
+\Bigl\langle \frac{F_2}{\cdot-\mu}, F_1 \Bigr\rangle,\qquad z\in\CC\setminus\ZZ_0,
$$
with the series converging absolutely in $\CC\setminus\ZZ_0$.
\label{L1}
\end{lemma}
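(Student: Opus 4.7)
This statement is attributed to \cite[Lemmas 2.3 and 4.1]{BBB1}; my plan is to sketch the mechanism so the role of each ingredient becomes transparent, treating the summability and the series identity in turn.

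For the summability, the goal is the pointwise bound
$$
|c_w|^2\lesssim |F_2(w)|^2 e^{-\pi|w|^2}\log(1+|w|)\cdot\|F_1\|^2_\FF,
$$
after which summability reduces to the upper sampling inequality $\sum_{w\in\ZZ}|F_2(w)|^2 e^{-\pi|w|^2}\lesssim \|F_2\|^2_\FF$; the latter follows by a weighted submean estimate applied on the disjoint discs $D(w,1/2)$. The pointwise bound itself is a Cauchy--Schwarz applied in the factored definition of $c_w$, together with the two asymptotics $|\sigma_0'(w)|\asymp (1+|w|)^{-1}e^{\pi|w|^2/2}$ (from $\sigma\in\EE_{2,\pi/2}$ being of completely regular growth with constant indicator $\pi/2$) and $\|\sigma_0/(\cdot-w)\|_\FF^2\asymp \log(1+|w|)/(1+|w|)^2$ (the latter by splitting the integral into $D(w,1)$ and its complement and using the envelope $|\sigma_0(\zeta)|^2 e^{-\pi|\zeta|^2}\lesssim (1+|\zeta|)^{-2}$).

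For the identity, the key observation is that $\mathcal I(F_1,F_2)$ is meromorphic in $\CC$ with simple poles exactly at $\ZZ_0$, the residue at $w\in\ZZ_0$ being $c_w$; all of the singular behavior is carried by the factor $F_2(z)/\sigma_0(z)$ in the explicit formula. The regularized bracket $1/(z-w)+1/(w-\mu)=(z-\mu)/[(z-w)(w-\mu)]$ decays as $1/|w|^2$, so Cauchy--Schwarz against the convergent sum $\sum_w\log(1+|w|)/(1+|w|)^4$ gives absolute convergence on $\CC\setminus\ZZ_0$. I would then set
$$
H(z):=\mathcal I(F_1,F_2)(z)+\Bigl\langle\frac{F_2}{\cdot-\mu},F_1\Bigr\rangle-\sum_{w\in\ZZ_0}c_w\Bigl[\frac{1}{z-w}+\frac{1}{w-\mu}\Bigr],
$$
so that the residue cancellation makes $H$ entire. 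Lemma~\ref{L15} gives $\mathcal I(F_1,F_2)(z)=O(|z|^{-1})$ off a thin set; a three-regime split ($|w|\le|z|/2$, $|z|/2<|w|<2|z|$, $|w|\ge 2|z|$) of the regularized series, combined with the $\log$-weighted $\ell^2$ bound for $\{c_w\}$, yields $O(\log|z|)$ off a thin set. Hence $H$ is entire and $O(\log|z|)$ off a thin set, and so a constant by a variant of Lemma~\ref{L40} (which propagates the thin-set bound to a global one) followed by standard Liouville. Evaluation at $z=\mu$ identifies the constant: the series vanishes, and since $F_2(\mu)=0$ with $\mu\notin\ZZ_0$ one has $\mathcal I(F_1,F_2)(\mu)=-\langle F_2/(\cdot-\mu),F_1\rangle$, whence $H(\mu)=0$ and $H\equiv 0$, which is the asserted identity.

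The main obstacle is the growth estimate in the intermediate regime $|z|/2<|w|<2|z|$, where $z$ can approach $\ZZ_0$. The requisite lattice sum $\sum_w\log(1+|w|)/(|z-w|^2|w|^2)$ is only controlled by $O(\log^2|z|/|z|^2)$ after removing a thin neighborhood of $\ZZ_0$; this is precisely why the Lemma~\ref{L40}-type harmonic estimation machinery (rather than a naive Liouville) is the right tool for the final constancy step.
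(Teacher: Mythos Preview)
The paper does not prove this lemma; it simply attributes the result to \cite[Lemmas~2.3 and 4.1]{BBB1}. You correctly identify this and supply a reconstruction of the argument, which is essentially sound for both parts.

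One point deserves care. In the series estimate, the intermediate regime $|z|/2<|w|<2|z|$ forces you to exclude the set $\bigcup_{w\in\ZZ_0}D(w,\epsilon)$ with \emph{fixed} $\epsilon>0$, and that set is \emph{not} thin in the paper's sense (its area density is $\pi\epsilon^2>0$). So the phrase ``off a thin set'' and the appeal to a Lemma~\ref{L40} variant are not literally justified as written. The fix is immediate and avoids harmonic-measure machinery altogether: since $H$ is entire, the maximum principle on each disc $D(w,1/2)$, whose boundary lies in $\{\dist(\cdot,\ZZ_0)\ge 1/2\}$, propagates the $O(\log|z|)$ bound into the excluded discs. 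Hence $H$ is entire with $H(z)=O(\log|z|)$ outside only the genuinely thin set $\Omega$ coming from Lemma~\ref{L15}, and then your Liouville-type step applies.

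In fact you can simplify further and dispense with Lemma~\ref{L15} and thin sets entirely: Lemma~\ref{LA} gives $I_1=o(1)$ and $I_2=O(|z|^{-1}\log^{1/2}|z|)$, while $|F_2(z)/\sigma_0(z)|=O(|z|)$ for $\dist(z,\ZZ_0)\ge\epsilon$, so $\mathcal I(F_1,F_2)(z)=O(\log^{1/2}|z|)$ there. Combined with your series bound and the maximum-modulus step above, this yields $H(z)=O(\log|z|)$ on all of $\CC$, hence $H$ is constant by the classical Liouville theorem, and $H(\mu)=0$ finishes as you indicate.
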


The following lemma establishes some relations between the orthogonality in the Fock space and the corresponding discrete Cauchy transform. 

\begin{lemma} Let $F_2,F_3\in\EE_2$, $F_1,F_2F_3\in\FF$, and let $F_3$ be of completely regular growth, $\inf_{[0,2\pi]}h_{F_3}=\eta>0$. 
Suppose that
$$
F_1\perp \frac{F_2F_3}{\cdot-\lambda},\qquad \lambda\in Z(F_3),
$$
and define
$$
d_w=\langle g_w,F_1\rangle=\frac{1}{\sigma'_0(w)}\Bigl\langle \frac{\sigma_0}{\cdot-w},F_1\Bigr\rangle,\qquad w\in\ZZ_0.
$$
Fix two distinct points $\lambda_1,\lambda_2\in Z(F_3)$ and set 
$$
\CCC(z)=\sum_{w\in\ZZ_0}\frac{d_wF_2(w)F_3(w)}{(z-w)(w-\lambda_1)(w-\lambda_2)}.
$$
Then for every $\ve>0$,
\begin{equation}
\CCC(z)=o(1),\qquad |z|\to\infty,\, \dist(z,\ZZ_0)\ge \ve.
\label{starT}
\end{equation}
Set $U=\sigma_0\cdot\mathcal I(F_1,F_2)$. Then $U\in \EE_{2,(\pi/2)-\eta}$ and 
\begin{gather*}
\sigma_0\cdot\CCC=\frac{UF_3}{(\cdot-\lambda_1)(\cdot-\lambda_2)},\\
U(w)=d_w\sigma'_0(w)F_2(w),\qquad  w\in\ZZ_0.
\end{gather*}
\label{L2}
\end{lemma}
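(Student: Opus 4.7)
The plan is to deduce all four claims from Lemmas~\ref{L1} and~\ref{L15}, applied to the pair $(F_1,F_2F_3)$ rather than $(F_1,F_2)$, exploiting the orthogonality hypothesis to kill boundary terms. First, claim~(4) follows by direct computation: at $w\in\ZZ_0$ the factor $\sigma_0(w)=0$ kills the first integral in
\[
U(z)=\sigma_0(z)\int_\CC\frac{F_2\overline{F_1}}{z-\zeta}\,d\nu-F_2(z)\int_\CC\frac{\sigma_0\overline{F_1}}{z-\zeta}\,d\nu,
\]
leaving $U(w)=F_2(w)\langle\sigma_0/(\cdot-w),F_1\rangle=F_2(w)\sigma'_0(w)\,d_w$.

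Next, set $c'_w=F_2(w)F_3(w)d_w$. Since $F_2F_3\in\FF$, I apply Lemma~\ref{L1} with second argument $F_2F_3$ and with $\mu=\lambda_i\in Z(F_3)$ for $i=1,2$; by orthogonality $\langle F_2F_3/(\cdot-\lambda_i),F_1\rangle=0$, so
\[
\sum_{w\in\ZZ_0}c'_w\Bigl[\frac{1}{z-w}+\frac{1}{w-\lambda_i}\Bigr]=\mathcal I(F_1,F_2F_3)(z),\qquad i=1,2.
\]
Subtracting the two identities yields $\sum_w c'_w/(w-\lambda_1)=\sum_w c'_w/(w-\lambda_2)$. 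Combined with the partial fraction expansion
\[
\frac{1}{(z-w)(w-\lambda_1)(w-\lambda_2)}=\frac{A}{z-w}-\frac{B}{w-\lambda_1}-\frac{C}{w-\lambda_2},
\]
where $A+B+C=0$ and $A=1/((z-\lambda_1)(z-\lambda_2))$, this collapses to $\CCC(z)=\mathcal I(F_1,F_2F_3)(z)/((z-\lambda_1)(z-\lambda_2))$, so
\[
\sigma_0\CCC=\frac{\tilde U}{(z-\lambda_1)(z-\lambda_2)},\qquad \tilde U:=\sigma_0\mathcal I(F_1,F_2F_3).
\]
Claim~\eqref{starT} is then immediate from Cauchy--Schwarz on $\CCC$, using $\sum|c'_w|^2/\log(1+|w|)<\infty$ (Lemma~\ref{L1} again) together with standard lattice-sum estimates for $\dist(z,\ZZ_0)\ge\ve$.

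It remains to identify $\tilde U$ with $UF_3$, which upgrades the displayed formula to~(3); claim~(2) follows at once, since $\tilde U\in\EE_{2,\pi/2}$ (Lemma~\ref{L15} gives $\mathcal I(F_1,F_2F_3)=O(1/|z|)$ off a thin set, and $\sigma_0\in\EE_{2,\pi/2}$), while the CRG multiplicativity $h_{UF_3}=h_U+h_{F_3}$ with $\inf h_{F_3}=\eta$ then forces $h_U\le\pi/2-\eta$. A direct expansion gives
\[
\mathcal I(F_1,F_2F_3)(z)-F_3(z)\mathcal I(F_1,F_2)(z)=-W(z),
\]
where
\[
W(z):=\int_\CC F_2(\zeta)\overline{F_1(\zeta)}\,\frac{F_3(\zeta)-F_3(z)}{\zeta-z}\,d\nu(\zeta)
\]
is entire in $z$; one must show $W\equiv0$. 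Writing $W=\Phi-F_3\Psi$ with $\Phi,\Psi$ the Cauchy transforms of $F_2F_3\overline{F_1}\,d\nu$ and $F_2\overline{F_1}\,d\nu$ (note that $F_2\in\FF$ follows from $F_2F_3\in\FF$ and the CRG of $F_3$ with $\inf h_{F_3}>0$), Lemma~\ref{LA} yields $\Phi,\Psi=o(1)$ at infinity. Orthogonality yields $\Phi(\lambda)=0$ for $\lambda\in Z(F_3)$, so $W$ vanishes on $Z(F_3)$ and $V:=W/F_3$ is entire of finite order; CRG of $F_3$ gives $|F_3(z)|\ge e^{(\eta-\ve)|z|^2}$ off a thin set, so $|V(z)|=o(1)$ there, and Lemma~\ref{L40} then forces $V\equiv0$, whence $W\equiv0$ and $\tilde U=UF_3$.

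The main obstacle is exactly this final identification $\tilde U=UF_3$: one must combine the orthogonality at each $\lambda\in Z(F_3)$ with the growth regularity of $F_3$ to squeeze $V=W/F_3$ into the Liouville-type Lemma~\ref{L40}; everything else reduces to applications of Lemma~\ref{L1}, straightforward partial fractions and Cauchy--Schwarz.
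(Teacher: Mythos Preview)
Your argument is correct and reaches all four conclusions, but the route differs from the paper's in an interesting way.

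The paper first proves \eqref{starT} using the pointwise bound $|d_w|\lesssim e^{-\pi|w|^2/2}\log^{1/2}(2+|w|)$ from \cite[Lemma~2.3]{BBB1}, then invokes a three-point identity (Lemma~3.1 of \cite{Bel}) to show that $\CCC(\lambda_3)=0$ for every $\lambda_3\in Z(F_3)\setminus\{\lambda_1,\lambda_2\}$; this forces $\sigma_0\CCC=F_3U/[(\cdot-\lambda_1)(\cdot-\lambda_2)]$ for some entire $U$, whose growth is read off from the quotient. Only afterwards is $U$ identified with $\sigma_0\mathcal I(F_1,F_2)$, by checking that the difference $T=U-\sigma_0\mathcal I(F_1,F_2)$ vanishes on $\ZZ_0$ (residue comparison and Lemma~\ref{L1}), hence $T/\sigma_0$ is entire, of polynomial growth by Lemma~\ref{LA}, and $o(1)$ off a thin set by Lemma~\ref{L15}. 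You instead go straight through Lemma~\ref{L1} and partial fractions to get $\sigma_0\CCC=\sigma_0\mathcal I(F_1,F_2F_3)/[(\cdot-\lambda_1)(\cdot-\lambda_2)]$, and then prove the multiplicativity $\mathcal I(F_1,F_2F_3)=F_3\,\mathcal I(F_1,F_2)$ by showing the entire difference $W$ vanishes on $Z(F_3)$ and that $W/F_3$ is killed by a Liouville argument. In effect, the paper identifies via the lattice $\ZZ_0$ and Lemma~\ref{L15}, while you identify via $Z(F_3)$ and Lemma~\ref{L40}; your route is essentially the mechanism behind Lemma~\ref{L5} transplanted here, and avoids the external input from \cite{Bel}.

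Two small remarks. First, your justification that $\tilde U\in\EE_{2,\pi/2}$ via Lemma~\ref{L15} is not quite right, since that lemma only controls $\mathcal I$ off a thin set; use instead the explicit formula $\tilde U=\sigma_0\cdot I_1-F_2F_3\cdot I_2$ with $I_1,I_2$ bounded (Lemma~\ref{LA}), together with $\sigma_0\in\EE_{2,\pi/2}$ and $|F_2F_3|\lesssim e^{\pi|z|^2/2}$. Second, your appeal to Lemma~\ref{L40} is more than needed: since $\Phi,\Psi=o(1)$ and the CRG lower bound $|F_3|\ge e^{(\eta-\ve)r^2}$ holds on full circles $|z|=r$ for $r$ outside a set of zero relative measure, the maximum of $|V|$ on such circles tends to $0$, and the ordinary maximum principle already forces $V\equiv 0$ (this is exactly the device in the proof of Lemma~\ref{L5}).
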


\begin{proof} By Lemma~2.3 in \cite{BBB1}, we have 
$$
|d_w|\lesssim e^{-(\pi/2)|w|^2}\log^{1/2}(2+|w|), \qquad w\in\ZZ_0,
$$
and, hence,
$$
\sum_{w\in\ZZ_0}\frac{|F_2(w)F_3(w)|\cdot|d_w|}{|w|^2}<\infty.
$$
This implies \eqref{starT}.

By the simple argument in the proof of Lemma~3.1 in \cite{Bel},
for every three distinct points $\lambda_1,\lambda_2,\lambda_3\in Z(F_3)$ we have 
$$
0=\Bigl\langle \frac{F_2F_3}{(\cdot-\lambda_1)(\cdot-\lambda_2)(\cdot-\lambda_3)},F_1 \Bigr\rangle=
\sum_{w\in\ZZ_0}\frac{d_wF_2(w)F_3(w)}{(w-\lambda_1)(w-\lambda_2)(w-\lambda_3)}.
$$
Hence, for fixed $\lambda_1,\lambda_2\in Z(F_3)$ we obtain 
$$
\CCC(z)=\sum_{w\in\ZZ_0}\frac{d_wF_2(w)F_3(w)}{(z-w)(w-\lambda_1)(w-\lambda_2)}=
\frac{F_3(z)U(z)}{\sigma_0(z)(z-\lambda_1)(z-\lambda_2)}
$$
for some entire function $U$. Next, since $\eta=\inf_{[0,2\pi]}h_{F_3}>0$, we have $U\in \EE_{2,(\pi/2)-\eta}$. 
Comparing the residues, we conclude that $U(w)=d_w\sigma'_0(w)F_2(w)$, $w\in\ZZ_0$.

Finally, set 
$$
T=U-\sigma_0\cdot\mathcal I(F_1,F_2).
$$
Then $T\in\EE$, and, by Lemma~\ref{L1}, $T$ vanishes on $\ZZ_0$. Set $\widetilde T=T/\sigma_0$. We have $\widetilde T\in\mathcal E$ and, by Lemma~\ref{LA},  
we obtain that $\widetilde T$ is of at most polynomial growth.
Lemma~\ref{L15} implies that $\widetilde T=o(1)$ as $|z|\to\infty$, $z\in\CC\setminus\Omega$, for some thin set $\Omega$, and hence,
$\widetilde T=0$, $T=0$.
\end{proof}

\begin{lemma} Let $F_1,F_2\in\FF$, and suppose that
$$
F_1\not\perp F_2, 
$$
and for some $E\in\EE$ and $P\in\PP$ we have  
$$
\mathcal I(F_1,F_2)=\frac{E}{\sigma_0}+P.
$$
Then, given $\gamma>0$, there exists a lattice thin set $\ZZ_1$ such that $E$ has at least one zero in every disc $D(w,\gamma)$, $w\in\ZZ_0\setminus \ZZ_1$.
\label{L26}
\end{lemma}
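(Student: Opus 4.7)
The plan is to use Lemma~\ref{L15} to compare $E$ with a concrete product involving $\sigma_0$ and a polynomial factor, then apply Rouch\'e's theorem on small circles around lattice points to locate zeros of $E$.

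Since $F_1\not\perp F_2$, we have $c:=\langle F_2,F_1\rangle\ne 0$. Combining Lemma~\ref{L15} with the hypothesis $\mathcal I(F_1,F_2)=E/\sigma_0+P$, the entire function
$$H(z):=zE(z)-(c-zP(z))\sigma_0(z)=\sigma_0(z)\bigl(z\,\mathcal I(F_1,F_2)(z)-c\bigr)$$
satisfies $|H(z)|=o(|\sigma_0(z)|)$ as $|z|\to\infty$, $z\in\CC\setminus\Omega$, for some thin set $\Omega$. This asymptotic is the starting identity on which the whole argument rests. Next, fix $\gamma'=\min(\gamma,1/3)$ and let $Z_P$ be the (finite) zero set of $c-zP$. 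For each $w\in\ZZ_0$ with $|w|$ sufficiently large, $\dist(w,Z_P)>\gamma'$, and admitting some $\rho_w\in[\gamma'/2,\gamma']$ with $\partial D(w,\rho_w)\subset\CC\setminus\Omega$, we have $|H(z)|<|(c-zP(z))\sigma_0(z)|$ on this circle, since $|H|/|\sigma_0|\to 0$ outside $\Omega$ while $|c-zP(z)|$ is bounded below by a positive constant (for $|w|$ large). Rouch\'e's theorem applied to $zE=(c-zP)\sigma_0+H$ then shows that $zE$ and $(c-zP)\sigma_0$ have the same number of zeros in $D(w,\rho_w)$; by the choice of $\gamma'$ this number equals one (the simple zero of $\sigma_0$ at $w$), and since $0\notin D(w,\rho_w)$, $E$ itself must have a zero in $D(w,\gamma)$.

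It then remains to take $\ZZ_1$ to be the set of $w\in\ZZ_0$ which fail the hypotheses of the previous paragraph: namely those $w$ of bounded modulus, those with $\dist(w,Z_P)\le\gamma'$, or those for which no radius $\rho\in[\gamma'/2,\gamma']$ yields a circle in $\CC\setminus\Omega$. The first two contributions are finite and trivially lattice thin. For the third, I would exploit the explicit structure of $\Omega$ supplied by the proof of Lemma~\ref{L15} via Lemma~\ref{LA}: $\Omega$ is essentially the large-value set of $|F_2/\sigma_0|$, which is localized near lattice points at which $F_2$ is not too small. Because $F_2\in\FF$, the sequence $|F_2(w)|^2 e^{-\pi|w|^2}$ is summable, and a direct estimate then shows that the set of $w$ around which $\Omega$ can fill the entire annulus $D(w,\gamma')\setminus D(w,\gamma'/2)$ (so that no avoiding circle exists) satisfies $\sum(|w|+1)^{-2}\log^{-1}(|w|+2)<\infty$, i.e.\ is lattice thin. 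The main obstacle is precisely this last step: one must use that $\Omega$ is not an abstract thin set but the specific large-value set arising from the Cauchy transforms in Lemma~\ref{LA}, because for an arbitrary thin set (say a union of 1-dimensional curves) there could be many lattice points $w$ at which no full circle of radius in $[\gamma'/2,\gamma']$ avoids $\Omega$, and the Rouch\'e argument would break down.
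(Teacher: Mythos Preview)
Your Rouch\'e set-up is correct and matches the paper's final winding-number step, but the treatment of the exceptional set $\ZZ_1$ has a genuine gap. The problem is your description of the thin set $\Omega$ from Lemma~\ref{L15} as ``essentially the large-value set of $|F_2/\sigma_0|$''. Write $\mathcal I(F_1,F_2)=I_1-(F_2/\sigma_0)I_2$ with $I_1(z)=\int F_2\overline{F_1}/(z-\zeta)\,d\nu$ and $I_2(z)=\int \sigma_0\overline{F_1}/(z-\zeta)\,d\nu$. The $(F_2/\sigma_0)I_2$ term is indeed controlled by the size of $|F_2/\sigma_0|$ together with Lemma~\ref{LA}, but you also need $zI_1(z)\to c$, i.e.\ $\int \zeta F_2(\zeta)\overline{F_1(\zeta)}/(z-\zeta)\,d\nu(\zeta)\to 0$. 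Since $\zeta F_2$ need not lie in $\FF$ (nor $F_1$ in $\FF_0$), Lemma~\ref{LA} does not give this, and the bad set for $I_1$ has nothing to do with the values of $F_2/\sigma_0$. So the summability argument you sketch does not cover the whole of $\Omega$, and you are left precisely in the situation you yourself flag: needing a full circle inside $\CC\setminus\Omega$ for an $\Omega$ whose structure you have not pinned down.

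The paper handles exactly this missing piece with a different mechanism: it observes that $\bar\partial I_1=\pi F_2\overline{F_1}e^{-\pi|\cdot|^2}\in L^2(\CC)$, so by the Beurling--Ahlfors transform $\partial I_1\in L^2(\CC)$, hence $\nabla I_1\in L^2(\CC)$. This gives, outside a lattice thin set $\ZZ_2$, a \emph{positive-area} union $T(w)$ of full circles in the annulus on which $zI_1$ (and then, using your $\ZZ_1$-type set, $z\mathcal I(F_1,F_2)$) has oscillation $\le|a|/2$. Now thinness of $\Omega$ is used only through area: if $|z\mathcal I-a|\ge 3|a|/4$ at one point of $T(w)$ then by the oscillation bound $T(w)\subset\Omega$, and since $m_2(T(w))\ge\gamma^2$ this forces $w$ into a lattice thin set $\ZZ_3$. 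For the remaining $w$ one has $|z\mathcal I-a|<3|a|/4$ on an entire circle $\partial D(w,r(w))\subset T(w)$, and the argument principle finishes as you describe. The point is that the oscillation estimate lets one upgrade ``$\CC\setminus\Omega$ meets $T(w)$'' (an area statement, robust to thin $\Omega$) to ``the estimate holds on a full circle'', which is exactly the step your approach lacks.
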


\begin{proof} 
By Lemma~\ref{L15},
\begin{equation}
\mathcal I(F_1,F_2)(z)=\frac{a+o(1)}{z},\qquad |z|\to\infty,\,z\in\CC\setminus\Omega, 
\label{39d}
\end{equation}
for some $a\not=0$ and for some thin set $\Omega$.

Set
\begin{align*}
I_1(z)&=\int_\CC \frac{F_2(\zeta)\overline{F_1(\zeta)}}{z-\zeta}\,d\nu(\zeta),\\
I_2(z)&=\int_\CC \frac{\sigma_0(\zeta)\overline{F_1(\zeta)}}{z-\zeta}\,d\nu(\zeta).
\end{align*}
so that $\mathcal I(F_1,F_2)=I_1-F_2I_2/\sigma_0$. 
By Lemma~\ref{LA}, for some $B<\infty$ we have 
\begin{equation}
|I_2(z)|^2\le B\frac{\log(2+|z|)}{1+|z|^2},\qquad z\in\CC.
\label{40d}
\end{equation}
Let $\gamma\in(0,1/2)$. Set 
$$
\ZZ_1=\Bigl\{ w\in\ZZ_0:\sup_{D(w,\gamma)\setminus D(w,\gamma/2)}
\Bigl|\frac{F_2}{\sigma_0}\Bigr|^2\ge \frac{|a|^2}{100 B \log(2+|z|)}\Bigr\}.
$$
If $w\in \ZZ_1$, then 
$$
\int_{D(w,1)}|F_2(\zeta)|^2\,d\nu(\zeta)\gtrsim \frac1{|w|^2\log(1+|w|)},
$$
and, hence, the set $\ZZ_1$ is lattice thin.

By \eqref{40d}, if $w\in \ZZ\setminus \ZZ_1$, then 
\begin{equation}
\sup_{z\in D(w,\gamma)\setminus D(w,\gamma/2)}
\Bigl|z\frac{F_2(z)}{\sigma_0(z)}I_2(z)\Bigr|\le \frac{|a|}4.
\label{41d}
\end{equation}

Next we use that $\bar\partial  I_1(z)=\pi F_2(z)\overline{F_1(z)}e^{-\pi|z|^2}$, and hence, $\bar\partial  I_1\in L^1(\CC)\cap L^\infty(\CC)\cap C^\infty(\CC)$. 
Furthermore, $\partial  I_1$ is the Beurling--Ahlfors transform \cite[Chapter 4]{AIM} of $\bar\partial  I_1\in L^2(\CC)$ and, hence, $\partial  I_1\in L^2(\CC)$.
Set
$$
\ZZ_2=\Bigl\{ w\in\ZZ_0:\int_{D(w,\gamma)\setminus D(w,\gamma/2)}
|\nabla I_1(z)|^2\,dm_2(z)\ge \frac{|a|^2\gamma^2}{100|w|^2}\Bigr\}.
$$
Since $\nabla I_1\in L^2(\CC)$, the set $\ZZ_2$ is lattice thin.
Furthermore, if $w\in \ZZ\setminus \ZZ_2$, then there exists $Q(w)\subset (\gamma/2,\gamma)$ such that 
$T(w)=\cup_{r\in Q(w)}\partial D(w,r)$ satisfies the conditions $m_2(T(w))\ge \gamma^2$ and 
$$
\osc_{T(w)}(zI_1(z))\le \frac{|a|}4.
$$
Here and later on, 
$$
\osc_A(f)=\sup_{z_1,z_2\in A}|f(z_1)-f(z_2)|.
$$

Now, if $w\in \ZZ\setminus (\ZZ_1\cup \ZZ_2)$, then, by \eqref{41d}, we have 
\begin{equation}
\osc_{T(w)}(z\mathcal I(F_1,F_2)(z))\le \frac{|a|}2.
\label{42d}
\end{equation}

Set 
\begin{multline*}
\ZZ_3=\Bigl\{ w\in\ZZ_0\setminus (\ZZ_1\cup \ZZ_2):\\
|z\mathcal I(F_1,F_2)(z)-a|\ge \frac{3|a|}4 \text{\ for some\ }z\in T(w)
\Bigr\}.
\end{multline*}
By \eqref{39d} and \eqref{42d}, the set $\ZZ_3$ is lattice thin.

Now, if $w\in\ZZ_0\setminus (\ZZ_1\cup \ZZ_2\cup \ZZ_3)$ and $r=r(w)\in Q(w)$, then 
$$
|z\mathcal I(F_1,F_2)(z)-a|\le\frac{3|a|}4,\qquad z\in \partial D(w,r).
$$
Thus the total change of the argument of $E/\sigma_0$ along $\partial D(w,r)$ is $0$ (consider first the case $P=0$, then the case $P\not=0$), and, hence, $E$ has one zero in $D(w,r)$.

Finally, $E$ has at least one zero in every disc $D(w,\gamma)$ for $w\in\ZZ_0$ outside a lattice thin set. 
\end{proof}

\section{Proof of Theorem \ref{mainthm2}} 
\label{sect4}

We start this section with four lemmas dealing with the closed polynomial span $[F]_\FF$ of $F\in\FF$. Then we pass to the proof of the theorem.

\begin{lemma} Let $F\in\EE$, and suppose that for every $A>0$, the function $F$ satisfies \eqref{expest}.
Then
$$
\overline{\Span}\bigl\{e_\lambda F: \lambda\in\mathbb C\bigr\}=[F]_\FF.
$$
\label{L3}
\end{lemma}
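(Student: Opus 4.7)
The plan is to prove the two inclusions separately. Write $W=\overline{\Span}\{e_\lambda F:\lambda\in\CC\}$ and $V=[F]_\FF=\overline{\Span}\{z^nF:n\ge 0\}$. I need to show $W\subseteq V$ (each exponential multiple is a norm-limit of polynomial multiples) and $V\subseteq W$ (each polynomial multiple lies in the exponential span). The first is a quantitative statement about Taylor convergence in $\FF$-norm; the second is a soft orthogonality/analyticity argument.

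\medskip

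\textbf{Inclusion $W\subseteq V$.} Here I would show that the partial sums $s_N(z)F(z)=\sum_{n=0}^N \frac{(\lambda z)^n}{n!}F(z)$ converge to $e_\lambda F$ in the norm of $\FF$. Since $s_N F\in\PP\cdot F\subseteq V$, this suffices. The key input is the growth bound \eqref{expest}: for every $A>0$, $|F(z)|^2 e^{-\pi|z|^2}\le C(A,F)^2 e^{-2A|z|}$, which gives
$$
\|z^n F\|_\FF^2 \le \frac{C(A,F)^2}{\pi}\int_\CC |z|^{2n}e^{-2A|z|}\,dm(z)=\frac{2C(A,F)^2\,(2n+1)!}{(2A)^{2n+2}}.
$$
By Stirling, $\sqrt{(2n+1)!}/n!$ is of order $2^n$ up to polynomial factors, so $|\lambda|^n\|z^nF\|_\FF/n! \lesssim_A \operatorname{poly}(n)\cdot(|\lambda|/A)^n$. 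Choosing $A>|\lambda|$ (which is permitted, since $A$ in \eqref{expest} is arbitrary), the series $\sum_n \frac{|\lambda|^n}{n!}\|z^nF\|_\FF$ converges, so $s_N F$ is Cauchy in $\FF$; pointwise convergence of $s_N F$ to $e_\lambda F$ on compacta identifies the limit. Hence $e_\lambda F\in V$ for every $\lambda\in\CC$, and so $W\subseteq V$.

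\medskip

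\textbf{Inclusion $V\subseteq W$.} It is enough to prove $W^\perp\subseteq V^\perp$. Fix $h\in W^\perp$, i.e.\ $\langle e_\lambda F,h\rangle_\FF=0$ for every $\lambda\in\CC$. Consider
$$
\Phi(\lambda)=\langle e_\lambda F,h\rangle_\FF=\frac1\pi\int_\CC e^{\lambda z}F(z)\overline{h(z)}\,e^{-\pi|z|^2}\,dm(z).
$$
Using \eqref{expest} once more, together with $h\in\FF$, Cauchy--Schwarz shows that differentiation under the integral sign is justified at every $\lambda$, so $\Phi$ is entire and
$$
\Phi^{(n)}(0)=\frac1\pi\int_\CC z^nF(z)\overline{h(z)}\,e^{-\pi|z|^2}\,dm(z)=\langle z^nF,h\rangle_\FF.
$$
Since $\Phi\equiv 0$, all its Taylor coefficients vanish, hence $h\perp z^nF$ for every $n\ge 0$, and therefore $h\in V^\perp$. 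This yields $V\subseteq W$ by Hahn--Banach, completing the proof.

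\medskip

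The only nontrivial piece is the quantitative Taylor estimate for the first inclusion — specifically, verifying that the factorial growth of $\|z^nF\|_\FF$ (controlled by $\sqrt{(2n+1)!}$) is tamed by the arbitrariness of $A$ in \eqref{expest}. The second inclusion is essentially automatic once one notices that the scalar function $\lambda\mapsto\langle e_\lambda F,h\rangle_\FF$ is entire and that its Taylor coefficients at the origin recover the polynomial moments $\langle z^nF,h\rangle_\FF$.
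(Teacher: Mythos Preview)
Your proof is correct and follows essentially the same approach as the paper's. For $W\subseteq V$ the paper observes that $\int_\CC |F(z)|^2\bigl(\sum_{k\ge 0}|\lambda z|^k/k!\bigr)^2\,d\nu(z)<\infty$ (a dominated-convergence formulation of your term-by-term estimate), and for $V\subseteq W$ it sets $a_n=\langle z^nF,H\rangle$, notes that $\sum a_n z^n/n!$ is entire and equals $\langle e_zF,H\rangle\equiv 0$, hence $a_n=0$ --- exactly your $\Phi$ argument.
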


\begin{proof} By \eqref{expest}, we have
$$
\int_\CC |F(z)|^2\Bigl( \sum_{k\ge 0} \frac{|\lambda z|^k}{k!} \Bigr)^2\,d\nu(z)<\infty,\qquad \lambda\in\CC.
$$
Therefore,
$$
e_\lambda F\in [F]_\FF,\qquad \lambda\in\CC.
$$

In the opposite direction, let $H\in\FF$ be orthogonal to all $e_\lambda F$, $\lambda\in\CC$. Set 
$$
a_n=\int_\CC \zeta^nF(\zeta)\overline{H(\zeta)}\,d\nu(\zeta),\qquad n\ge 0.
$$
By \eqref{expest}, the series $\sum_{n\ge 0}a_nz^n/n!$ converges in the whole plane and equals to the zero function. Hence, $a_n=0$, $n\ge 0$. By the Hahn--Banach 
theorem, we conclude that 
$$
\PP F\subset \overline{\Span}\bigl\{e_\lambda F: \lambda\in\mathbb C\bigr\}.
$$
\end{proof}

\begin{lemma} Let $F\in\FF_0$, $H\in [F]_\FF$. 
Then for every $\lambda\in Z(H)\setminus Z(F)$ we have 
$$
\frac{H}{\cdot-\lambda}\in[F]_\FF.
$$
\label{L4}
\end{lemma}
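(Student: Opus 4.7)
The plan is to approximate $H/(\cdot-\lambda)$ by explicit polynomial multiples of $F$. By the definition of $[F]_\FF$, choose polynomials $p_n$ with $p_n F \to H$ in $\FF$. Since point evaluation at $\lambda$ is continuous on $\FF$ (given by $\langle \cdot, k_\lambda\rangle$), one has $p_n(\lambda) F(\lambda) \to H(\lambda) = 0$, and as $F(\lambda) \ne 0$ this forces $p_n(\lambda) \to 0$. Set
$$
q_n(z) := \frac{p_n(z) - p_n(\lambda)}{z - \lambda} \in \PP,
$$
so that $q_n F \in \PP F \subset [F]_\FF$. I aim to prove $q_n F \to H/(\cdot - \lambda)$ in $\FF$.

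The main technical ingredient is that the division operator
$$
D_\lambda \colon \{ g \in \FF : g(\lambda) = 0 \} \longrightarrow \FF, \qquad D_\lambda g := \frac{g}{\cdot - \lambda},
$$
is well defined and bounded on the closed subspace $\{ g \in \FF : g(\lambda) = 0 \}$ (the orthogonal complement of $\CC\,k_\lambda$). Its graph is closed, because $\FF$-convergence implies locally uniform convergence, so the closed graph theorem delivers boundedness as soon as we know $D_\lambda$ is everywhere defined. A direct argument also works: split $\int_\CC$ into $\{|z-\lambda|\ge 1\}$, where $|g(z)/(z-\lambda)|^2 \le |g(z)|^2$, and $\{|z-\lambda| < 1\}$, where the Cauchy representation of $g/(\cdot-\lambda)$ on $|\zeta - \lambda| = 1$ combined with the sub-mean-value inequality for $|g|^2$ supplies the bound.

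With $D_\lambda$ in hand, note that $F - F(\lambda) \in \FF$ vanishes at $\lambda$, so $\tilde F := (F - F(\lambda))/(\cdot - \lambda) \in \FF$. An elementary algebraic manipulation gives
$$
q_n F - \frac{H}{\cdot - \lambda} \;=\; \frac{u_n}{\cdot - \lambda} \;-\; p_n(\lambda)\,\tilde F, \qquad u_n := p_n F - H - p_n(\lambda) F(\lambda).
$$
Here $u_n(\lambda) = -H(\lambda) = 0$ and $\|u_n\|_\FF \le \|p_n F - H\|_\FF + |p_n(\lambda)|\,|F(\lambda)|\,\|1\|_\FF \to 0$, so the boundedness of $D_\lambda$ yields $u_n/(\cdot - \lambda) \to 0$ in $\FF$. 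Since also $p_n(\lambda) \to 0$, the second term tends to zero in $\FF$. Hence $q_n F \to H/(\cdot - \lambda)$ in $\FF$, and closedness of $[F]_\FF$ gives $H/(\cdot - \lambda) \in [F]_\FF$.

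I do not foresee a serious obstacle: everything reduces to the boundedness of $D_\lambda$, which is a soft Banach-space fact, after which the decomposition of $q_n F - H/(\cdot - \lambda)$ into a term handled by $D_\lambda$ and a term killed by $p_n(\lambda) \to 0$ closes the argument.
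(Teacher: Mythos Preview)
Your proof is correct and follows essentially the same approach as the paper: approximate $H$ by $p_nF$, use continuity of point evaluation together with $F(\lambda)\ne 0$ to get $p_n(\lambda)\to 0$, replace $p_n$ by $q_n=(p_n-p_n(\lambda))/(\cdot-\lambda)$, and pass to the limit using the boundedness of the division operator $D_\lambda$ on $\{g\in\FF:g(\lambda)=0\}$. The paper's write-up is terser (it records the chain of inequalities $\|H-PF\|<\ve\Rightarrow|P(\lambda)|\le C\ve\Rightarrow\|H-(P-P(\lambda))F\|\le C_1\ve\Rightarrow\|H/(\cdot-\lambda)-q F\|\le C_2\ve$ without isolating $D_\lambda$), whereas you make the boundedness of $D_\lambda$ explicit and use a slightly more elaborate algebraic decomposition; but the content is the same.
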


\begin{proof} If $P\in\PP$ and 
$$
\|H-PF\|<\ve,
$$
then
$$
|P(\lambda)|\le \ve\cdot C(F,\lambda).
$$
Hence, 
\begin{gather*}
\|H-(P-P(\lambda))F\|\le \ve\cdot C_1(F,\lambda),\\
\Bigl\|\frac{H}{\cdot-\lambda}-\frac{P-P(\lambda)}{\cdot-\lambda}F\Bigr\|\le \ve\cdot C_2(F,\lambda).
\end{gather*}
Thus, $H/(\cdot-\lambda)$ can be approximated by elements in $[F]_\FF$.
\end{proof}

\begin{lemma} Let $F\in\FF_0$ and let $H\in\EE_2$ be of completely regular growth. Suppose that $\inf_{[0,2\pi]}h_H>0$, 
$H$ has simple zeros, $Z(F)\cap Z(H)=\emptyset$, and $FH\in[F]_\FF$. Next, let $W\in\EE$ be such that $FW\in\FF$ and 
$$
FW\perp [F]_\FF.
$$
Then 
\begin{equation}
H(z)\int_\CC \frac{F(\zeta)\overline{F(\zeta)W(\zeta)}}{z-\zeta}\,d\nu(\zeta)=
\int_\CC \frac{F(\zeta)H(\zeta)\overline{F(\zeta)W(\zeta)}}{z-\zeta}\,d\nu(\zeta).
\label{star5}
\end{equation}
\label{L5}
\end{lemma}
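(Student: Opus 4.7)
The plan is to rewrite \eqref{star5} as $\Phi\equiv 0$, where
$$
\Phi(z)=H(z)I(z)-J(z),\quad I(z)=\int_\CC\frac{F(\zeta)\overline{F(\zeta)W(\zeta)}}{z-\zeta}\,d\nu(\zeta),\quad J(z)=\int_\CC\frac{F(\zeta)H(\zeta)\overline{F(\zeta)W(\zeta)}}{z-\zeta}\,d\nu(\zeta).
$$
First I would observe that $F$, $FH$ and $FW$ all lie in $\FF$ (for $F$, use $1\in\PP$ and $F\in\FF_0$), so by Cauchy--Schwarz the densities of $I$ and $J$ are in $L^1(\CC)$. A direct computation using $\bar\partial_z[(z-\zeta)^{-1}]=\pi\delta_\zeta$ gives
$$
\bar\partial_z\Phi=\pi H(z)F(z)\overline{F(z)W(z)}e^{-\pi|z|^2}-\pi F(z)H(z)\overline{F(z)W(z)}e^{-\pi|z|^2}=0,
$$
so $\Phi$ is entire.

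Next I would show that $\Phi$ vanishes on $Z(H)$. Fix $\lambda\in Z(H)$; since $Z(F)\cap Z(H)=\emptyset$, we have $\lambda\in Z(FH)\setminus Z(F)$, and Lemma~\ref{L4} applied to the element $FH\in[F]_\FF$ yields $FH/(\cdot-\lambda)\in[F]_\FF$. The hypothesis $FW\perp[F]_\FF$ then forces
$$
J(\lambda)=-\Bigl\langle\frac{FH}{\cdot-\lambda},FW\Bigr\rangle=0,
$$
whence $\Phi(\lambda)=H(\lambda)I(\lambda)-J(\lambda)=0$. Because $H$ has simple zeros, the quotient $T:=\Phi/H$ is entire.

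To finish, I would estimate $T$ outside a thin set and apply Lemma~\ref{L40}. By Lemma~\ref{LA} we have $I(z),J(z)=o(1)$ as $|z|\to\infty$. Set $\eta=\inf_{[0,2\pi]}h_H>0$ and let $E_H\subset[0,\infty)$ be the exceptional set of zero relative linear measure in the CRG condition for $H$. Then $|H(re^{i\theta})|\ge e^{(\eta-\ve)r^2}$ uniformly in $\theta$ for large $r\notin E_H$, and hence
$$
|T(z)|\le|I(z)|+\frac{|J(z)|}{|H(z)|}=o(1),\qquad |z|=r\notin E_H,\ r\to\infty.
$$
The planar set $\Omega=\{re^{i\theta}:r\in E_H\}$ satisfies $m_2(\Omega\cap D(0,R))\le 2\pi R\cdot|E_H\cap[0,R]|=o(R^2)$, and is therefore thin in the sense of the paper. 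Combined with the crude bound $|\Phi(z)|\le e^{C|z|^2}$ and the CRG lower bound for $|H|$ off $\Omega$, one gets $|T(z)|\le e^{(C-\eta+\ve)|z|^2}$ off $\Omega$, from which a standard Phragm\'en--Lindel\"of argument (the exceptional radii form a set of zero relative density) promotes the estimate to all of $\CC$ and shows that $T$ is of order at most $2$.

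By Lemma~\ref{L40}, $T$ must be constant, and the decay of $T$ along sequences in $\CC\setminus\Omega$ then forces $T\equiv 0$, which is precisely \eqref{star5}. The main obstacle is the finite-order bound on $T$: while the estimate $|T|=o(1)$ holds only off $\Omega$, one must rigorously bridge the exceptional annular shells $\{|z|\in E_H\}$ to conclude that the \emph{entire} function $T$ (which is entire only by a removable-singularity argument at the zeros of $H$) has finite order, so that Lemma~\ref{L40} applies.
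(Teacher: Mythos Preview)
Your argument is correct and tracks the paper's proof closely: define $\Phi=HI-J$ (the paper's $AH-B$), show it is entire, use Lemma~\ref{L4} to check that it vanishes on $Z(H)$, set $T=\Phi/H$, and then use Lemma~\ref{LA} together with the CRG lower bound on $H$ to see that $\max_{|z|=r}|T(z)|\to 0$ for $r\to\infty$ outside a set $E_H$ of zero relative linear measure.

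The only divergence is at the final step, where you take an unnecessary detour through Lemma~\ref{L40} and worry about establishing finite order for $T$. The paper simply notes that since the bound $\max_{|z|=r}|T(z)|\to 0$ holds along an unbounded set of radii, the ordinary maximum principle for entire functions gives $T\equiv 0$ immediately. Your ``main obstacle'' is therefore a red herring: the same circle-wise maximum principle you would use to promote the finite-order bound across the exceptional annuli already gives $T=0$ outright, because the bound on the good circles is $o(1)$, not merely $e^{Cr^2}$. Lemma~\ref{L40} is reserved for situations where the good set is not a union of full circles.
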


\begin{proof} Denote
\begin{align*}
A(z)&=\int_\CC \frac{F(\zeta)\overline{F(\zeta)W(\zeta)}}{z-\zeta}\,d\nu(\zeta),\\
B(z)&=\int_\CC \frac{F(\zeta)H(\zeta)\overline{F(\zeta)W(\zeta)}}{z-\zeta}\,d\nu(\zeta).
\end{align*}
Since $\bar\partial (AH-B)=0$, $AH-B$ is an entire function; this function vanishes on $Z(H)$ because of Lemma~\ref{L4}. 
Denote $T=A-B/H$. We have $T\in\mathcal E$. Applying Lemma~\ref{LA} to $A$ and $B$ we obtain that 
$\max_{\theta\in[0,2\pi]}|T(re^{i\theta)}|\to 0$ as $r$ tends to $\infty$, outside a set of $r$ of zero relative measure, 
and hence, $T=0$.
\end{proof}

\begin{lemma} Let $F\in\FF_0$ and let $H\in\EE_2$ be of completely regular growth. Suppose that $\eta=\inf_{[0,2\pi]}h_H>0$, 
$H$ has simple zeros, $Z(F)\cap Z(H)=\emptyset$, $(FH\cdot \EE)\cap \FF=FH\cdot\mathbb C$, and $FH\in[F]_\FF$. 
Then
\begin{equation}
[F]_\FF=\EE F\cap\FF.
\label{star6}
\end{equation}
\label{L6}
\end{lemma}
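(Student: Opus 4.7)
The inclusion $[F]_\FF\subseteq\EE F\cap\FF$ is immediate, and both sides are closed subspaces of $\FF$; by Hahn--Banach it suffices to show that any $W\in\EE$ with $FW\in\FF$ and $FW\perp[F]_\FF$ satisfies $FW=0$. Fix such a $W$. Since $FH\in[F]_\FF$ and $Z(F)\cap Z(H)=\emptyset$, Lemma~\ref{L4} yields $FH/(\cdot-\lambda)\in[F]_\FF$ for every $\lambda\in Z(H)$, whence $FW\perp FH/(\cdot-\lambda)$. Thus Lemma~\ref{L2} applies with $F_1=FW$, $F_2=F$, $F_3=H$, producing an entire function $U:=\sigma_0\mathcal I(FW,F)\in\EE_{2,(\pi/2)-\eta}$ together with the identity
$$
\sigma_0(z)\CCC(z)=\frac{U(z)H(z)}{(z-\lambda_1)(z-\lambda_2)}\qquad(\lambda_1\ne\lambda_2\in Z(H))
$$
and the lattice values $U(w)=d_w\sigma_0'(w)F(w)$ on $\ZZ_0$, where $d_w=\langle g_w,FW\rangle$. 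Combining $\langle F,FW\rangle=0$ with Lemma~\ref{L15} yields $\mathcal I(FW,F)(z)=o(|z|^{-1})$ outside a thin set, hence $U(z)=o(|\sigma_0(z)|/|z|)$ there.

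The crucial step is to establish that $UFH\in\FF$. Using the identity $UH=\sigma_0\CCC(z-\lambda_1)(z-\lambda_2)$, the quantitative decay $|\CCC(z)|\lesssim\log^{3/2}|z|/|z|$ (derived from the lattice bound $|d_w|\lesssim e^{-\pi|w|^2/2}\log^{1/2}(2+|w|)$ together with the reproducing-kernel estimate $|F(w)H(w)|\lesssim\|FH\|_\FF e^{\pi|w|^2/2}$), the quasi-periodic bound on $|\sigma_0(z)|^2e^{-\pi|z|^2}$, and the hypothesis $F\in\FF_0$ (so $z^nF\in\FF$ for every $n\ge 0$), a careful Cauchy--Schwarz decomposition yields $\|UFH\|_\FF<\infty$. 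The hypothesis $(FH\cdot\EE)\cap\FF=FH\cdot\CC$ then forces $U\equiv c$ for some constant $c\in\CC$. The relation $d_w\sigma_0'(w)F(w)=c$ on $\ZZ_0$, combined with the $|d_w|$ decay and the uniqueness results of Lemmas~\ref{L22} and~\ref{L24}, rules out $c\ne 0$: if $F$ has any zero on $\ZZ_0$ then $c=0$ immediately, and otherwise the forced lower bound $|F(w)|\gtrsim|c|/\log^{1/2}|w|$ contradicts the uniqueness constraints.

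With $U\equiv 0$, we have $d_wF(w)=0$ on $\ZZ_0$, so $c_w:=F(w)d_w=0$ for every $w\in\ZZ_0$. The Parseval-type identity of Lemma~\ref{L1} then collapses to $\mathcal I(FW,F)(z)+\langle F/(\cdot-\mu),FW\rangle=0$ for each $\mu\in Z(F)\setminus\ZZ_0$, yielding the additional orthogonalities $FW\perp F/(\cdot-\mu)$. These, together with $FW\perp[F]_\FF$ and the uniqueness results of Section~\ref{sect2}, force $FW=0$, completing the proof.

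The main obstacle is verifying that $UFH\in\FF$: naive pointwise type bounds on $U$ give only $|UFH|^2e^{-\pi|z|^2}\lesssim e^{(\pi-2\eta+\epsilon)|z|^2}$, which integrates solely when $\eta>\pi/2$. For arbitrary $\eta>0$ one must exploit the finer identity $UH=\sigma_0\CCC(z-\lambda_1)(z-\lambda_2)$, together with the genuine $1/|z|$ decay of $\CCC$ along most directions and the interpolation formula $U(w)=d_w\sigma_0'(w)F(w)$ on $\ZZ_0$, to obtain the required $L^2$ bound.
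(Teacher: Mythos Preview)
Your proof has a genuine gap at the step you yourself flag as the ``main obstacle'': the membership $UFH\in\FF$ does not follow from the identity $UH=\sigma_0\,\CCC\,(\cdot-\lambda_1)(\cdot-\lambda_2)$ together with the decay $|\CCC(z)|\lesssim |z|^{-1}\log^{3/2}|z|$ off the lattice. Since $|\sigma_0(z)|^2e^{-\pi|z|^2}\asymp\dist(z,\ZZ)^2/|z|^2$, these estimates yield only
\[
|UFH(z)|^2e^{-\pi|z|^2}\lesssim |F(z)|^2\log^3|z|.
\]
The hypothesis $F\in\FF_0$ says that $\int|F|^2|z|^{2n}e^{-\pi|z|^2}\,dm_2<\infty$ for every $n$; it gives no control whatsoever on $\int|F|^2\,dm_2$, and the assumptions of the lemma permit $|F(z)|$ to grow like $e^{((\pi/2)-\eta)|z|^2}$ in some directions. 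The Gaussian weight has been entirely absorbed by $\sigma_0$, and no Cauchy--Schwarz rearrangement or appeal to the lattice values $U(w)=d_w\sigma_0'(w)F(w)$ can recover it. Your final paragraph acknowledges the difficulty but does not resolve it. The subsequent steps are also unjustified: from $U\equiv c\ne0$ one would get $|F(w)|\gtrsim|c|\,|w|/\log^{1/2}|w|$ on $\ZZ_0$, which is perfectly compatible with $F\in\FF_0$, so Lemmas~\ref{L22} and \ref{L24} do not apply; and the additional orthogonalities $FW\perp F/(\cdot-\mu)$ for $\mu\in Z(F)$ do not by themselves force $FW=0$.

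The paper avoids this obstruction by a different route. It never attempts to place $UFH$ in $\FF$. Writing $V$ for your $W$, it forms $R=UV/\sigma_0-\CCC_2$, where $\CCC_2$ is the Parseval-type series of Lemma~\ref{L1}, and shows that $R$ is a polynomial by multiplying two Cauchy-type expansions $\CCC_1\CCC_3$ and invoking the hypothesis $(FH\cdot\EE)\cap\FF=FH\cdot\CC$ to bound $FHR$. Lemma~\ref{L26} then forces $UV$ to have a zero in $D(w,\gamma)$ for $w\in\ZZ_0$ outside a lattice thin set; repeating this after the shifts $\TT_{2k\gamma}$, $0\le k<1/(4\gamma)$, produces disjoint families of zeros of $V$ each of positive lower density, while the polynomial nature of $R$ caps the upper density of $Z(V)$ at $1$. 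Choosing $\gamma$ small enough gives a contradiction. The argument is about zero distribution, not about $\FF$-membership of $UFH$.
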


\begin{proof} Without loss of generality, we can assume that $F$ has infinitely many zeros. 
Shifting $F$ and $H$, if necessary, by an operator $\TT_\alpha$, 
we can assume that $Z(FH)\cap\ZZ_0=\emptyset$. 
Suppose that \eqref{star6} does not hold and choose $V\in\EE\setminus\{0\}$ such that $FV\in\FF$ and 
$$
FV\perp [F]_\FF.
$$
By Lemma~\ref{L4}, 
$$
FV\perp\frac{FH}{\cdot-\lambda},\qquad \lambda\in Z(H).
$$
Set
$$
a_w=F(w)V(w),\qquad d_w=\frac1{\sigma'_0(w)}\Bigl\langle \frac{\sigma_0}{\cdot-w},FV\Bigr\rangle,\qquad w\in\ZZ_0.
$$
Fix two distinct points $\lambda_1,\lambda_2\in Z(H)$ and set $U=\sigma_0\cdot \mathcal I(FV,F)$. 
By Lemma~\ref{L2} (applied to $F_1=FV$, $F_2=F$, $F_3=H$), $U\in \EE_{2,(\pi/2)-\eta}$, and 
$$
\CCC_1(z)=\sum_{w\in\ZZ_0}\frac{d_wF(w)H(w)}{(z-w)(w-\lambda_1)(w-\lambda_2)}=
\frac{H(z)U(z)}{\sigma_0(z)(z-\lambda_1)(z-\lambda_2)}.
$$
Furthermore, 
$$
U(w)=d_w\sigma'_0(w)F(w),\qquad  w\in\ZZ_0,
$$
and for every $\ve>0$,
\begin{equation}
\CCC_1(z)=o(1),\qquad |z|\to\infty,\, \dist(z,\ZZ_0)\ge \ve.
\label{starT1}
\end{equation}

Fix $\mu\in Z(FV)$ and set
\begin{gather}
\CCC_2(z)=\sum_{w\in\ZZ_0}a_wd_w\Bigl[\frac1{z-w}+\frac1{w-\mu}\Bigr],\notag \\
R=\frac{UV}{\sigma_0}-\CCC_2.\label{yuyu}
\end{gather}

By Lemma~\ref{L1} (applied to $F_1=F_2=FV$), the series converges absolutely in $\CC\setminus \ZZ_0$. 
By \eqref{nw3}, for every $\ve>0$,
\begin{equation}
\CCC_2(z)=o(\log|z|),\qquad |z|\to\infty,\, \dist(z,\ZZ_0)\ge \ve.
\label{nw4}
\end{equation}

Comparing the residues, we conclude that $R\in\EE$. 

Next, choose two distinct points $\mu_1,\mu_2\in Z(F)$ and write the Lagrange interpolation formula 
$$
\CCC_3(z)=\sum_{w\in\ZZ_0}\frac{F(w)V(w)}{\sigma'_0(w)(z-w)(w-\mu_1)(w-\mu_2)}=
\frac{F(z)V(z)}{\sigma_0(z)(z-\mu_1)(z-\mu_2)}.
$$
Since $FV\in\FF$, the series converges absolutely in $\CC\setminus (\ZZ_0\cup\{\mu_1,\mu_2\})$, and for every $\ve>0$,
\begin{equation}
\CCC_3(z)=o(1),\qquad |z|\to\infty,\, \dist(z,\ZZ_0)\ge \ve.
\label{nw5}
\end{equation}

Now,
\begin{multline*}
\CCC_1(z)\CCC_3(z)=\frac{H(z)U(z)}{\sigma_0(z)(z-\lambda_1)(z-\lambda_2)}\cdot \frac{F(z)V(z)}{\sigma_0(z)(z-\mu_1)(z-\mu_2)}\\
=\frac{F(z)H(z)}{\sigma_0(z)}\cdot \frac{R(z)+\CCC_2(z)}{(z-\lambda_1)(z-\lambda_2)(z-\mu_1)(z-\mu_2)}.
\end{multline*}
By \eqref{starT1}, \eqref{nw4}, \eqref{nw5}, and the maximum principle, $FHR$ belongs to $\PP{\cdot}\FF$. 
Furthermore, $FH(R-1)\in\PP{\cdot}\FF$. 
Since the only entire multiples of $FH$ in $\FF$ are the constant ones, we conclude 
that 
$R$ is a polynomial.

By Lemma~\ref{L1} (applied to $F_1=F_2=FV$) and \eqref{yuyu} we have 
$$
\frac{UV}{\sigma_0}=\mathcal I(FV,FV)+
\int_\CC \frac{|F(\zeta)V(\zeta)|^2}{\zeta-\mu}\,d\nu(\zeta)+R.
$$
By Lemma~\ref{L26} (applied to $F_1=F_2=FV$), we obtain that
$UV$ has at least one zero in every disc $D(w,1/10)$ for $w\in\ZZ_0$ outside a lattice thin set. 
Since $R$ is a polynomial, \eqref{yuyu} and \eqref{nw4} show that 
$UV\in \PP{\cdot}\FF$. Since $U\in \EE_{2,(\pi/2)-\eta}$, a subset of $Z(V)$ of positive lower density is contained in $\cup_{w\in\ZZ_0}D(w,1/10)$. 
Repeating the above argument for $\TT_{1/2}(F)$ and $\TT_{1/2}(V)$, we obtain $U_1\in \EE_{2,(\pi/2)-\eta}$ such that $W=\TT_{1/2}(V)U_1$ has at least one zero in every disc $D(w,1/10)$ for $w\in\ZZ_0$ outside a lattice thin set. Since $W\in \PP\cdot\FF$, and a subset of $Z(W)$ 
of positive lower density is contained in $\mathbb C\setminus\cup_{w\in\ZZ_0}D(w,1/10)$, we obtain a contradiction. Thus, relation \eqref{star6} does hold. 
\end{proof}

\begin{proof}[Proof of Theorem~\ref{mainthm2}]  Suppose that
$$
\overline{\Span}\bigl\{e_\lambda F: \lambda\in\mathbb C\bigr\}\not=\EE F\cap \FF. 
$$
Set $V(z)=G(z)\sigma((1-\alpha)^{1/2}z)$. We have $V\in\EE_{2,q}$ for some $q\ge 1$. Furthermore, $FV\in\FF$. 
Without loss of generality, we can assume that $FV$ has simple zeros. Otherwise, we can shift a bit the zeros of $F$ and $G$ without changing our 
hypothesis and conclusions. 
By Lemmas~\ref{L3} and \ref{L6}, 
$$
FV\not\in [F]_\FF.
$$
Next, let 
$$
V_s(z)=V(sz),\qquad 0<s\le 1.
$$
Since $F\in \EE_{2,(\pi/2)-\beta}$ for some $\beta>0$, $FV\in\mathcal F$, and $V$ is of completely regular growth with $\inf_{[0,2\pi]}h_V>0$, we obtain that $FV_s\in\mathcal F$, $0<s<1$. 

Let $0<\eta<\eta_1<\sqrt{\beta/q}$, $0<t\le \eta$. Let $P_n$, $n\ge 0$, be the $n$-th partial sum of the Taylor series of $V_t$. Then
$$
\sup_{z\in\CC}|P_n(z)-V_t(z)|e^{-\eta_1^2q|z|^2}\to 0,\qquad n\to\infty.
$$
Hence,
$$
P_nF\to V_tF
$$
in $\FF$ as $n\to\infty$. 
Thus, 
$$
FV_t\in [F]_\FF,\qquad 0<t\le \eta.
$$
Hence, there exist $\delta\in\bigl(0,\min(1-\eta,\eta^2(1-\alpha)/(2q))\bigr)$ and $s\in[\eta,1-\delta]$ such that 
$$
FV_s\in [F]_\FF,\quad FV_{s+\delta}\not\in [F]_\FF.
$$
Once again, without loss of generality, we can assume that $FV_s$ has simple zeros and $Z(FV_{s+\delta})\cap \mathcal Z_0=\emptyset$. 

Choose $W\in\EE$ such that 
$$
FW\in\FF,\quad FW\perp  [F]_\FF,\quad FW\not\perp FV_{s+\delta}.
$$
By Lemma~\ref{L4}, 
\begin{equation}
FW\perp \frac{FV_s}{\cdot-\lambda},\qquad \lambda\in Z(V_s).
\label{lt}
\end{equation}
Set
$$
a_w=F(w)V_{s+\delta}(w),\qquad d_w=\frac1{\sigma'_0(w)}\Bigl\langle \frac{\sigma_0}{\cdot-w},FW\Bigr\rangle,\qquad w\in\ZZ_0.
$$
Furthermore, set 
\begin{equation}
U=\sigma_0\cdot \mathcal I(FW,F).
\label{ltt}
\end{equation}
By \eqref{lt} and by Lemma~\ref{L2} (applied to $F_1=FW$, $F_2=F$, $F_3=V_s$), we have 
$U\in \EE_{2,\pi\alpha/2}$ and 
$U(w)=d_w\sigma'_0(w)F(w)$, $w\in\ZZ_0$.

Fix $\mu\in Z(FV_{s+\delta})\setminus \mathcal Z_0$ and define 
\begin{gather}
\CCC(z)=\sum_{w\in\ZZ_0}a_wd_w\Bigl[\frac1{z-w}+\frac1{w-\mu}\Bigr],\notag\\
S=\frac{UV_{s+\delta}}{\sigma_0}-\CCC.\label{sha1}
\end{gather}
Comparing the residues we see that $S\in\EE$. 
By Lemma~\ref{L1} (applied to $F_1=FW$, $F_2=FV_{s+\delta}$), the series defining $\CCC$ converges absolutely in $\mathbb C\setminus \mathcal Z_0$, and  
$$
\frac{UV_{s+\delta}}{\sigma_0}=\mathcal I(FW,FV_{s+\delta})
+\Bigl\langle \frac{FV_{s+\delta}}{\cdot-\mu}, FW \Bigr\rangle+S.
$$
As in the proof of Lemma~\ref{L6}, we obtain that for every $\ve>0$,
\begin{equation}
\CCC_2(z)=o(\log|z|),\qquad |z|\to\infty,\, \dist(z,\ZZ_0)\ge \ve.
\label{sha2}
\end{equation}

By \eqref{ltt},
$$
\frac{UV_{s+\delta}}{\sigma_0}=V_{s+\delta}\cdot \mathcal I(FW,F).
$$
Hence,
\begin{multline}
\label{nw7}
S(z)=-\int_\CC \frac{F(\zeta)V_{s+\delta}(\zeta)\overline{F(\zeta)W(\zeta)}}{z-\zeta}\,d\nu(\zeta)\\+
V_{s+\delta}(z)\int_\CC \frac{F(\zeta)\overline{F(\zeta)W(\zeta)}}{z-\zeta}\,d\nu(\zeta)
-\Bigl\langle \frac{FV_{s+\delta}}{\cdot-\mu}, FW \Bigr\rangle.
\end{multline}
By Lemma~\ref{L5} applied with $H=V_s$ we have 
\begin{multline*}
S(z)=-\int_\CC \frac{F(\zeta)V_{s+\delta}(\zeta)\overline{F(\zeta)W(\zeta)}}{z-\zeta}\,d\nu(\zeta)
-\Bigl\langle \frac{FV_{s+\delta}}{\cdot-\mu}, FW \Bigr\rangle\\+
\frac{V_{s+\delta}(z)}{V_s(z)}\int_\CC \frac{F(\zeta)V_s(\zeta)\overline{F(\zeta)W(\zeta)}}{z-\zeta}\,d\nu(\zeta).
\end{multline*}
Therefore, $S\in\EE_{2,q(s+\delta)^2-qs^2}\subset \EE_{2,2q\delta}$. Hence, $\widetilde S=S_{(2q\delta)^{-1/2}}\in\FF$.

Next, by \eqref{nw7} and by Lemma~\ref{LA}, 
$S$ is bounded on $Z(V_{s+\delta})$ and, hence, $\widetilde S$ is bounded on a lattice of density at least $\eta^2(1-\alpha)/(2q\delta)>1$. By Lemma~\ref{L22}, 
$\widetilde S=S$ is a constant. Thus, 
$$
\frac{UV_{s+\delta}}{\sigma_0}=\mathcal I(FW,FV_{s+\delta})
+\Bigl\langle \frac{FV_{s+\delta}}{\cdot-\mu}, FW \Bigr\rangle+S(0).
$$
By \eqref{sha1} and \eqref{sha2}, $UV_{s+\delta}\in\mathcal P\cdot\mathcal F$. 
By Lemma~\ref{L26} (applied to $FW$ and $FV_{s+\delta}$), we conclude that
$UV_{s+\delta}$ has at least one zero in every disc $D(w,1/10)$ for $w\in\ZZ_0$ outside a lattice thin set. 
However, shifting $F$ and $V_{s+\delta}$ we obtain that a subset of $Z(V_{s+\delta})$ of positive density belongs to $\CC\setminus\cup_{w\in\ZZ_0}D(w,1/10)$. We get a contradiction, which completes the proof.
\end{proof}

\section{Proof of Theorem~\ref{thm2} and Corollary~\ref{cor}}
\label{sect5}

\begin{proof}[Proof of Corollary~\ref{cor}]
Choose an integer $N$ such that 
$$
\sup_{z\in\CC}\card(Z(F)\cap Q_{z,N})<N^2,
$$
where $Q_{z,N}$ is the square centered at $z$ of sidelength $\sqrt\pi N$.
Let $M\in N\mathbb N$ be a number to be chosen later on.
Choose $\Lambda\subset\mathbb C$ disjoint from $Z(F)$ in such a way that 
$$
\card(\Gamma_z)=\alpha M^2,\qquad z\in \ZZ^M=M\mathbb Z\times M\mathbb Z,
$$
for some $\alpha<1$, where $\Gamma_z=(Z(F)\cup\Lambda)\cap Q_{z,M}$. Without loss of generality $Z(F)\cup\Lambda$ is disjoint from $\cup_{z\in \ZZ^M}\partial Q_{z,M}$.

We can choose large $M$ and place the points of $\Lambda$ in such a way that the measures 
$\alpha M^2\delta_z-\sum_{\lambda\in \Gamma_z}\delta_\lambda$, $z\in \ZZ^M$,  
have the first three moments equal to $0$. Then, arguing as in \cite[Section 4.4]{BGL} (see also \cite{LM}), we obtain that the canonical product $H$ corresponding to 
the set $Z(F)\cup\Lambda$ satisfies the estimate  
\begin{equation}
\dist(w,Z(H))^Be^{-\alpha|w|^2/2}\lesssim |H(w)|\lesssim e^{-\alpha|w|^2/2},\qquad w\in\CC,
\label{f78}
\end{equation}
for some positive number $B$. Indeed, choose $w\in\CC$, $z\in \ZZ^M$, and denote $L(\zeta)=\log(1-w/\zeta)$. 
We have $\alpha M^2=\card \Gamma_z$ and 
\begin{multline*}
\biggl| \sum_{\lambda\in\Gamma_z}\Bigl(\log\Bigl|1-\frac w{\lambda}\Bigr| -\log\Bigl|1-\frac w{z}\Bigr|\Bigr) \biggr| \\ \le 
|L'(z)|\cdot \Bigl| \sum_{\lambda\in \Gamma_z} (z-\lambda)\Bigr|+\frac12|L''(z)|\cdot \Bigl| \sum_{\lambda\in \Gamma_z} (z-\lambda)^2\Bigr|\\ +
C
\cdot \Bigl(\frac1{|w-z|^3}+\frac1{|z|^3}\Bigr).
\end{multline*}
Since two first sums on the right hand side are equal to zero, summing up by $z\in \ZZ^M$, we arrive at \eqref{f78}.

Choose $\lambda_1,\lambda_2\in\Lambda$ and set $H_1=H/[(\cdot-\lambda_1)(\cdot-\lambda_2)]$, $G=H_1/F$. Then $G$ is of completely 
regular growth, $h_G\equiv c>0$, and 
$(FG\cdot \EE)\cap \FF_{(\alpha)}=FG\cdot\mathbb C$. 
It remains to apply Theorem~\ref{mainthm2}.
\end{proof}

\begin{lemma} There exists $K\in (1,\infty)$ such that if $\eta>0$, $F\in\EE_{2,\eta}$, $V\in\EE$, $FV\in \FF$, then  
$V\in\EE_{2,(\pi/2)+K\eta}$.
\label{sha3}
\end{lemma}

\begin{proof} See \cite[Chapter I, Sections 8,9]{Lev}.
\end{proof}

\begin{proof}[Proof of Theorem~\ref{thm2}] Suppose that $\eta<\pi\beta/(8K)$, where $\beta\in(0,1)$ is the number in the statement of Lemma~\ref{L24} 
and $K$ is the number in the statement of Lemma~\ref{sha3}. 
Choose $\varepsilon\in(2K\eta/\pi,\beta/4)$ and set $H(z)=\sigma((1-\varepsilon)^{1/2}z)$. Clearly, $FH\in [F]_\FF$. Without loss of generality, 
we can assume that $Z(H)\cap Z(F)=\emptyset$. 

Suppose that the claim of the theorem does not hold. 
Choose $V\in\EE$ such that $FV\in\FF$ and $FV\perp [F]$ and set $U=\sigma_0\cdot \mathcal I(FV,F)$. 
By Lemma~\ref{L2} (applied to $F_1=FV$, $F_2=F$, $F_3=H$), we get $U\in \EE_{2,\pi\varepsilon/2}$. Next, arguing as in the proof of Lemma~\ref{L6}, we obtain that 
$$
\frac{UV}{\sigma_0}=\mathcal I(FV,FV)+S,
$$
for some $S\in\EE$. Since $V\in\EE_{2,\pi/2+K\eta}$, we have
$S\in\EE_{2,\pi\varepsilon}$. 

Replacing $F$ and $V$ by $F_\alpha=\mathcal T_\alpha(F)$ and $V_\alpha=\mathcal T_\alpha(V)$, correspondingly, we obtain 
$$
\frac{U_\alpha V_\alpha}{\sigma_0}=\mathcal I(F_\alpha V_\alpha,F_\alpha V_\alpha)+S_\alpha.
$$

If $S_\alpha$ and $S_{\alpha_1}$ are polynomials for some $\alpha,\alpha_1\in\CC$ such that $\alpha-\alpha_1\not\in\ZZ$, then we choose 
$\gamma>0$ such that $D(\alpha-\alpha_1,\gamma)\cap\ZZ=\emptyset$. By Lemma~\ref{L26} we obtain that both $U_\alpha V_\alpha$ 
and $U_{\alpha_1} V_{\alpha_1}$ have at least one zero in every disc $D(w,\gamma)$ for $w\in\ZZ$ outside a lattice thin set.
Then the lower density of the set 
$$
\{w\in\ZZ:D(w+\alpha,\gamma)\cap Z(V)\not=\emptyset\}
$$
is at least $3/4$. The same is true with $\alpha$ replaced by $\beta$. This contradicts the fact that the upper density of $Z(V)$ is at most $5/4$.

Thus, for some $\alpha_0$ and for every $\alpha\in\CC\setminus(\alpha_0+\mathcal Z)$, $S_\alpha$ is not a polynomial.    
We can find $\alpha\not\in\alpha_0+\mathcal Z$ 
and $\ZZ_1\subset\ZZ_0$ of lower density at least $1-\varepsilon$ such that $V_\alpha$ has no zeros 
in $\cup_{z\in \ZZ_1}D(z,\log^{-2}(1+|z|))$. Correspondingly, for some $\ZZ_2\subset\ZZ$ of lower density at least $1-2\varepsilon$ we obtain that 
$U_\alpha V_\alpha$ has no zeros 
in $\cup_{z\in \ZZ_2}D(z,\log^{-2}(1+|z|))$. By Lemma~\ref{LA}, $(U_\alpha V_\alpha/\sigma_0)-S_\alpha$ has at most polynomial growth. 
By the Rouch\'e theorem, for some $C>0$, $N<\infty$ we obtain that 
$$
\inf_{\partial D(z,\rho\log^{-2}(1+|z|))}|S_\alpha|<C|z|^N.
$$
for every $0<\rho<1$ and every $z\in \mathcal Z_2$. 
Dividing $S_\alpha$ by several zeros (and possibly adding a polynomial), we arrive at the conditions of Lemma~\ref{L24}. 
Thus, we conclude that $S_\alpha$ is a polynomial. This contradiction completes the proof. \end{proof}

\section{Proof of Theorem \ref{cexTh}}
\label{sect6}

The main idea of the proof goes back to \cite{Scand}.
\smallskip

{\bf Step 1: Construction of $F$.} 
We choose $\beta$ such that
$$1<\alpha<\beta<2.$$
Consider the function 
$$
f(z)=\exp\Bigl(\frac{\pi}{2}z^2-z^\beta\Bigr),\qquad z\in\Omega=\Bigl\{re^{i\theta}:r>0,\, |\theta|\le \frac{\pi}{4}\Bigr\},
$$
with the principal branch $z^\beta(1)=1$. The function $f$ is bounded on $\partial\Omega$.
Moreover,
\begin{align*}
\log|f(re^{i\theta})|&=\frac{\pi}2 \cos(2\theta) r^2-\cos(\beta\theta)r^\beta,\qquad re^{i\theta}\in\Omega,\\
\log|f(x+iy)|&=\frac{\pi}2 x^2 -x^\beta+O(1),\qquad x\to\infty,\,|y|\le 1,\\
\log|f(re^{\pm i\pi/4})|&=-\cos\Bigl(\frac{\pi\beta}4\Bigr)r^\beta,\qquad r>0,\\
\log|f'(re^{\pm i\pi/4})|&=-\Bigl(\cos\Bigl(\frac{\pi\beta}4\Bigr)+o(1)\Bigr)r^\beta,\qquad r\to\infty.
\end{align*}
Next, set
$$
f_1(z)=\frac1{2\pi i}\int_{\partial\Omega}\frac{f(w)\,dw}{z-w},\qquad z\in\CC\setminus\overline{\Omega}.
$$
It is well known that $f_1$ extends to an entire function. Indeed, 
$f_1$ is analytic in $\mathbb{C}\setminus\overline{\Omega}$.
Put
$$
f^R_1(z)=\frac1{2\pi i}\int_{\partial\Omega_R}\frac{f(w)\,dw}{z-w},\qquad z\in\CC\setminus\overline{\Omega_R},
$$
$$
\Omega_R=\Omega\cap\{|z|>R\}.
$$
The function $f^R_1$ is an analytic continuation of $f_1$ to $\mathbb C\setminus \overline{\Omega_R}$. 
Thus, when $R\to\infty$, 
$f_1$ extends to an entire function.
 
By the Sokhotski--Plemelj theorem we get
\begin{gather}
|f_1(re^{i\theta})|=\exp\Bigl[\frac{\pi}2 \cos(2\theta)r^2-\cos(\beta\theta)r^\beta\Bigr]+O(1),\label{star1}\\ \qquad\qquad\qquad\qquad\qquad\qquad\qquad\qquad\qquad re^{i\theta}\in\Omega,\,r\to\infty,\notag\\
|f_1(x+iy)|=\exp\Bigl(\frac{\pi}2 x^2-x^\beta\Bigr)+O(1),\quad x\to\infty,\,|y|\le 1.\label{star2}
\end{gather}
We fix $\delta$ such that $\beta<\delta<2$ and define  
$$
F(z)=f_1\bigl(e^{-i\pi/(2\delta)}z\bigr)+f_1\bigl(e^{i\pi/(2\delta)}z\bigr).
$$

\begin{claim} For every entire function $h$ of order 
at most $\alpha$ we have 
$$
hF\in \mathcal F.
$$
\end{claim}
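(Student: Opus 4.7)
The plan is to estimate $|F(z)|$ in three angular regions and bound $\int|hF|^2\,d\nu$ region by region. Since $1<\beta<\delta<2$ gives $\pi/4<\pi/(2\delta)<\pi/2$, the two rotated sectors
\[
S_\pm=\bigl\{re^{i\theta}:|\theta\mp\pi/(2\delta)|\le\pi/4\bigr\}
\]
are disjoint, and for $z\in S_+$ the point $e^{i\pi/(2\delta)}z$ has argument in $(\pi/\delta-\pi/4,\pi/\delta+\pi/4)\subset(\pi/2,\pi)$, which is uniformly bounded (angularly) away from $\partial\Omega$. The design of $F$ is such that its Gaussian-scale growth can only occur in $S_+\cup S_-$, and even there it carries the additional decay factor $\exp(-\cos(\beta\phi)r^\beta)$ inherited from the $-z^\beta$ term in $f$; since $\alpha<\beta$, this factor will absorb the growth of any $h$ of order $\le\alpha$.

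The first thing I would do is complement \eqref{star1}--\eqref{star2} with a pointwise bound on $f_1$ outside $\overline\Omega$. Using the Cauchy integral representation of $f_1$ on $\CC\setminus\overline\Omega$ together with $|f(w)|\le\exp(-\cos(\beta\pi/4)|w|^\beta)$ on the two rays constituting $\partial\Omega$, splitting the integration at $|w|=|z|/2$ yields $|f_1(z)|=O(|z|^{-1})$ uniformly over any angular sector disjoint from $\overline\Omega$.

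Next, on $S_+$ I would parametrize $z=re^{i(\pi/(2\delta)+\phi)}$ with $|\phi|\le\pi/4$; the first summand of $F$ satisfies \eqref{star1} at $e^{-i\pi/(2\delta)}z=re^{i\phi}$, while the second is $O(|z|^{-1})$ by the previous step. This yields
\[
|F(z)|\le\exp\Bigl(\tfrac{\pi}{2}\cos(2\phi)r^2-\cos(\beta\phi)r^\beta\Bigr)+O(1),
\]
and symmetrically on $S_-$; on $S_0=\CC\setminus(S_+\cup S_-)$, both summands of $F$ are $O(1)$. Fixing $\ve>0$ with $\alpha+\ve<\beta$, we have $|h(z)|\le\exp(|z|^{\alpha+\ve})$ for large $|z|$, so on $S_+$ the integrand $|hF|^2 e^{-\pi|z|^2}$ is dominated by
\[
C\exp\bigl(-2\pi\sin^2\phi\cdot r^2-2\cos(\beta\phi)r^\beta+2r^{\alpha+\ve}\bigr)+O\bigl(e^{2r^{\alpha+\ve}-\pi r^2}\bigr).
\]
Since $\beta<2$ gives $\cos(\beta\phi)\ge\cos(\beta\pi/4)>0$ uniformly in $\phi\in[-\pi/4,\pi/4]$ and $\alpha+\ve<\beta$, the last two terms are eventually $\le-cr^\beta$ uniformly in $\phi$; integrating in $(r,\phi)$ yields a finite integral over $S_+$. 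The sector $S_-$ is symmetrical, and $\int_{S_0}$ is trivial because there $|hF|^2 e^{-\pi|z|^2}\le e^{2r^{\alpha+\ve}-\pi r^2}$. Hence $hF\in\FF$.

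The main obstacle is the first step: the bound on $f_1$ outside $\overline\Omega$ is not explicitly recorded in the paper, and a fully general version would be delicate near $\partial\Omega$ itself. In the partition above, however, the points at which we evaluate $f_1$ outside $\Omega$ have argument bounded (for fixed $\delta<2$) uniformly away from $\pm\pi/4$, so only the ``generic'' case of the Cauchy-integral estimate is actually needed.
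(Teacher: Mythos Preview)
Your approach is essentially the paper's: both decompose the plane angularly, apply \eqref{star1} in neighbourhoods of the two directions $\pm\pi/(2\delta)$, and use that $f_1$ is bounded on $\CC\setminus\Omega$; the paper simply takes much narrower sectors (half-width $1/10$ instead of your $\pi/4$) and therefore gets by with the cruder off-sector estimate $\log|F(re^{i\theta})|\le\frac{\pi}{2}\cos(1/5)\,r^2$.

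One correction to your final paragraph: the assertion that the ``outside'' evaluation points all stay uniformly away from $\pm\pi/4$ is true on $S_\pm$ (where the second summand has argument in $[\pi/\delta-\pi/4,\pi/\delta+\pi/4]$, bounded away from $\pi/4$ since $\delta<2$) but \emph{not} on $S_0$: as $z$ approaches $\partial S_0$, the rotated points $e^{\mp i\pi/(2\delta)}z$ approach $\partial\Omega$, so your ``generic'' Cauchy-integral bound does not cover them. This is not a genuine obstacle, and the paper glosses over the same point: \eqref{star1} at $\theta=\pm\pi/4$ gives $|f_1(re^{\pm i\pi/4})|=e^{-\cos(\beta\pi/4)r^\beta}+O(1)=O(1)$, so $f_1$ is bounded on $\partial\Omega$; since $f_1$ has order $2$, Phragm\'en--Lindel\"of in a thin sector of opening $<\pi/2$ just outside $\Omega$ then yields $f_1=O(1)$ on all of $\CC\setminus\Omega$, which is what both proofs actually use.
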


\begin{proof}
By \eqref{star1},
$$
\log|F(re^{i\theta}|\le 
\begin{cases}
\dfrac{\pi}2(r^2-r^\beta)+O(1),\qquad \theta\in J,\\ \\
\dfrac{\pi}2 \cos(1/5)r^2,\qquad \theta\not\in J,
\end{cases}
$$
where 
$$
J=\Bigl[-\frac{\pi}{2\delta}-\frac1{10},-\frac{\pi}{2\delta}+\frac1{10}\Bigr]\cup \Bigl[\frac{\pi}{2\delta}-\frac1{10},\frac{\pi}{2\delta}+\frac1{10}\Bigr].
$$
Therefore,
$$
\int_\CC |hF(z)|^2e^{-\pi|z|^2}\,dm_2(z)<\infty.
$$
\end{proof}
\medskip

{\bf Step 2: Key estimate.} Choose $\gamma\in(\beta,\delta)$.

\begin{claim} \label{est} Define by $\mathcal P_1$ 
the family of the polynomials $P$ such that 
\begin{equation}
\|PF\|_{\mathcal F}\le 1.\label{star3}
\end{equation}
Then for some $C>0$ 
we have 
$$
\sup_{P\in \mathcal P_1}|P(x)|\le C\exp(x^\gamma),\qquad x\ge 0.
$$
\end{claim}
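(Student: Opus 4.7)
The plan is a Phragmén--Lindelöf argument in the sector
$$
S=\bigl\{re^{i\theta}:r\ge 0,\,|\theta|\le \pi/(2\delta)\bigr\},
$$
whose two boundary rays are precisely the directions along which $F$ attains its maximal growth $\exp(\pi r^2/2-r^\beta)$. Since $\gamma<\delta$ and $\delta<2$, the opening $\pi/\delta$ of $S$ is such that functions of order $\gamma$ in $S$ fall within the scope of Phragmén--Lindelöf with critical order $\delta$.

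First I would establish the boundary lower bound on $F$. On the ray $\theta=\pi/(2\delta)$ we have
$$
F(re^{i\pi/(2\delta)})=f_1(r)+f_1(re^{i\pi/\delta}),
$$
and \eqref{star2} gives $|f_1(r)|\asymp \exp(\pi r^2/2-r^\beta)$ as $r\to\infty$. Since $\delta<2$, the point $re^{i\pi/\delta}$ satisfies $\pi/\delta>\pi/2>\pi/4$ and lies outside $\overline{\Omega}$, so the Cauchy representation $f_1(z)=(2\pi i)^{-1}\int_{\partial\Omega}f(w)\,dw/(z-w)$ applies. Combining $\dist(re^{i\pi/\delta},\partial\Omega)\ge c_\delta r$ with the $L^1$ decay $|f(se^{\pm i\pi/4})|=\exp(-\cos(\pi\beta/4)s^\beta)$ on $\partial\Omega$ yields $|f_1(re^{i\pi/\delta})|=O(1/r)$. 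The same argument on $\theta=-\pi/(2\delta)$ gives, for large $r$,
$$
|F(re^{\pm i\pi/(2\delta)})|\gtrsim \exp(\pi r^2/2-r^\beta).
$$

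Next, I would use the reproducing kernel inequality in $\FF$ to obtain $|(PF)(z)|\le C_0\exp(\pi|z|^2/2)$ for every $P\in\mathcal P_1$, with $C_0$ independent of $P$. Dividing by the lower bound above,
$$
|P(re^{\pm i\pi/(2\delta)})|\le C_1\exp(r^\beta),\qquad r\ge r_0,
$$
and the small-$r$ part is absorbed into a constant uniformly in $P$ by using that $|PF|$ is uniformly bounded on compact sets and $|F|$ is bounded below away from its (fixed) zero set, after a small deformation of the boundary rays if necessary. Thus $|P(re^{\pm i\pi/(2\delta)})|\le C_2\exp(r^\beta)$ for all $r\ge 0$.

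Finally, set $Q(z)=P(z)\exp(-z^\gamma)$, with the principal branch of $z^\gamma$, analytic in $S$ since $\pi/(2\delta)<\pi/2$. For $z=re^{i\theta}\in S$, $|\gamma\theta|\le \gamma\pi/(2\delta)<\pi/2$, so $\Re(z^\gamma)=r^\gamma\cos(\gamma\theta)\ge r^\gamma\cos(\gamma\pi/(2\delta))>0$. On $\partial S$,
$$
|Q(re^{\pm i\pi/(2\delta)})|\le C_2\exp\bigl(r^\beta-r^\gamma\cos(\gamma\pi/(2\delta))\bigr)\le C_3,
$$
using $\beta<\gamma$. Inside $S$, $|\exp(-z^\gamma)|\le 1$, so $|Q|\le |P|$, in particular $Q$ has order $0<\delta$. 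Phragmén--Lindelöf in a sector of opening $\pi/\delta$ with critical order $\delta$ therefore gives $|Q(z)|\le C_3$ for all $z\in S$; restricting to the positive real axis yields $|P(x)|\le C_3\exp(x^\gamma)$ for $x\ge 0$, which is the desired inequality. The main obstacle I expect is Step~1, namely showing that the second term $f_1(re^{i\pi/\delta})$ is negligible compared to $f_1(r)$; this reduces cleanly to the Cauchy representation of $f_1$ outside $\Omega$ combined with the super-exponential decay of $f$ on $\partial\Omega$.
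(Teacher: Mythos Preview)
Your argument is correct and takes a genuinely different route from the paper's. Both proofs begin with the same observation: along the rays $\arg z=\pm\pi/(2\delta)$ the function $F$ is essentially $f_1$ evaluated on the positive axis, so $|F(re^{\pm i\pi/(2\delta)})|\asymp\exp(\pi r^2/2-r^\beta)$ for large $r$. From there the two proofs diverge. The paper uses the $L^2$ Fock norm: it integrates $|PF|^2e^{-\pi|z|^2}$ over strips around these rays, gets $\int_0^\infty |P(te^{\pm i\pi/(2\delta)})|^2e^{-2t^\beta}\,dt\le C$ (after a Fubini step and a shift), then applies the conformal map $z\mapsto z^\delta$ to turn the sector into a half-plane and invokes $H^2$ theory. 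You instead use the pointwise reproducing-kernel inequality $|PF(z)|\le C_0e^{\pi|z|^2/2}$, divide by the pointwise lower bound on $|F|$ to get $|P(re^{\pm i\pi/(2\delta)})|\le C_1e^{r^\beta}$ directly, and then apply the Phragm\'en--Lindel\"of theorem in the sector to $P(z)e^{-z^\gamma}$.

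Your approach is more elementary: it avoids the Fubini/shift argument, the conformal change of variables, and the Hardy-space step, at the cost of requiring a pointwise (rather than integrated) lower bound on $|F|$ along the rays---which, as you note, follows from the Cauchy representation of $f_1$ outside $\Omega$. The paper's $L^2$ route is more robust in situations where only integrated lower bounds are available, but for the specific $F$ constructed here your pointwise argument is the shorter path.

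One small point worth tightening: your treatment of the small-$r$ region (``a small deformation of the boundary rays if necessary'') is informal. The cleanest fix is to choose $r_0$ large enough that the ray lower bound holds and such that $F$ has no zeros on $|z|=r_0$; then $|P|\le|PF|/|F|$ is uniformly bounded on that circle, hence (by the maximum principle) on the disc $|z|\le r_0$, and you apply Phragm\'en--Lindel\"of in the truncated sector $S\cap\{|z|>r_0\}$, whose boundary consists of the arc and the two ray segments. This avoids any deformation and keeps the positive real axis inside the region.
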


\begin{proof}
The estimates \eqref{star2} and \eqref{star3} yield that 
$$
\int_{x>0,\,|y|\le 1}|P((x+iy)e^{i\pi/(2\delta)})|^2e^{-2x^\beta}\,dx\,dy\le C,\qquad P\in\mathcal P_1.
$$
In the same way,
$$
\int_{x>0,\,|y|\le 1}|P((x+iy)e^{-i\pi/(2\delta)})|^2e^{-2x^\beta}\,dx\,dy\le C,\qquad P\in\mathcal P_1.
$$
By the Fubini theorem, for every $P\in\mathcal P_1$ we can find $y(P)\in[-1,1]$ such that 
$$
\int_0^\infty|P((x\pm iy(P))e^{\pm i\pi/(2\delta)})|^2e^{-2x^\beta}\,dx\le C_1.
$$
Since the point evaluations are locally uniformly bounded in the Fock space, by the maximum principle we obtain that
$$
\sup_{P\in \mathcal P_1,\, |z|\le 2}|P(z)|\le C_2.
$$

Note that the lines $\{ (x + iy(P) ) e^{i\pi/(2\delta)} :\, x\in\mathbb{R}\}$ 
and $\{ (x - iy(P) ) e^{-i\pi/(2\delta)} :\, x\in\mathbb{R}\}$ 
intersect at the point $-y(P)/\sin\frac{\pi}{2\delta}$.
Therefore, if we set 
$$
Q(z) =P\Bigl(z-\frac{y(P)}{\sin\frac{\pi}{2\delta}}\Bigr),
$$
then we have
$$
\int_0^\infty|Q( t e^{\pm i\pi/(2\delta)})|^2 e^{-2t^\beta}\,dt\le C_3.
$$
Put
$$
Q_1(re^{i\theta}) = Q(r^{1/\delta}e^{i\theta/\delta})\exp\Bigl[-\frac12 r^{\gamma/\delta}e^{i\theta\gamma/\delta}\Bigr],
\qquad r\ge 0,\,|\theta|\le\frac\pi2.
$$
Then $Q_1$ is bounded and analytic in the right half-plane, and 
$$
\int_{\mathbb R} |Q_1(iy)|^2\, dy\le C_4.
$$
Therefore,
$$
|Q_1(x)|\le C_5,\qquad x\ge 1,
$$
and as a result,
$$
|P(x)|\le C_6\exp(x^{\gamma}),\qquad x\ge 0.
$$
\end{proof}
\medskip

{\bf Step 3: Construction of $G$.}
Next we fix $\sigma$ and $\eta$ such that $\delta<\eta<\sigma<2$. We consider the function 
$$
g(z)=\exp(z^\sigma),\qquad z\in\Omega_1=\Bigl\{re^{i\theta}:r>0,\, |\theta|\le \frac{\pi}{2\eta}\Bigr\},
$$
with the principal branch $z^\sigma(1)=1$.
Then
\begin{align*}
\log|g(re^{i\theta})|&=\cos(\sigma\theta)r^\sigma,\qquad re^{i\theta}\in\Omega_1,\\
\log|g(x)|&=x^\sigma,\qquad x\ge 0,\\
\log|g(re^{\pm i\pi/(2\eta)})|&=\cos\Bigl(\frac{\pi\sigma}{2\eta}\Bigr)r^\sigma,\qquad r>0,\\
\log|g'(re^{\pm i\pi/(2\eta)})|&=\Bigl(\cos\Bigl(\frac{\pi\sigma}{2\eta}\Bigr)+o(1)\Bigr)r^\sigma,\qquad r\to\infty.
\end{align*}

Set
$$
G(z)=\frac1{2\pi i}\int_{\partial\Omega_1}\frac{g(w)\,dw}{z-w},\qquad z\in\CC\setminus\overline{\Omega_1}.
$$
Then $G$ extends to an entire function, 
\begin{equation}
|G(x)|=\exp(x^\sigma)+O(1),\qquad x\ge 0,
\label{star1u}
\end{equation}
and
\begin{equation}
|G(re^{i\theta})|=
\begin{cases}
\exp\Bigl(\cos(\sigma\theta)r^\sigma\Bigr)+O(1),\quad \theta\in [-\frac{\pi}{2\sigma},\frac{\pi}{2\sigma}],\\
\,O(1),\quad \theta\in [-\pi,\pi]\setminus [-\frac{\pi}{2\sigma},\frac{\pi}{2\sigma}].
\end{cases}
\label{star2u}
\end{equation}


\begin{claim} $FG\in \mathcal F$.
\end{claim}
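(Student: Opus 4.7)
The plan is to verify directly that $\int_{\mathbb{C}}|F(z)G(z)|^2 e^{-\pi|z|^2}\,dm_2(z)<\infty$ by an angular decomposition of the plane based on where $F$ and where $G$ are large. First I record the pointwise estimates. By \eqref{star1}, for $re^{i\phi}\in\Omega$ one has $|f_1(re^{i\phi})|\lesssim \exp(\pi\cos(2\phi)r^2/2-\cos(\beta\phi)r^\beta)$, while for $z$ outside $\overline{\Omega}$ the Cauchy-integral definition of $f_1$, combined with $f\in L^1(\partial\Omega)$ and the decay of $f_1$ on $\partial\Omega$, yields $|f_1(z)|=O(1)$ uniformly. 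Writing $F(z)=f_1(e^{-i\pi/(2\delta)}z)+f_1(e^{i\pi/(2\delta)}z)$ and setting $I_F^{\pm}:=[\pm\pi/(2\delta)-\pi/4,\,\pm\pi/(2\delta)+\pi/4]$, $I_F:=I_F^+\cup I_F^-$, and noting that $I_F^+$ and $I_F^-$ are disjoint because $\delta<2$, I obtain: $|F(re^{i\theta})|=O(1)$ for $\theta\notin I_F$, while for $\theta\in I_F^{\pm}$, with $\phi:=\theta\mp\pi/(2\delta)\in[-\pi/4,\pi/4]$, the off-peak term is $O(1)$ and hence
\[
|F(re^{i\theta})|\lesssim \exp\!\Bigl(\tfrac{\pi}{2}\cos(2\phi)r^2-\cos(\beta\phi)r^\beta\Bigr).
\]
By \eqref{star1u}--\eqref{star2u}, $|G(re^{i\theta})|\lesssim \exp(\cos(\sigma\theta)r^\sigma)$ for $|\theta|\le \pi/(2\sigma)=:I_G$, and $|G(re^{i\theta})|=O(1)$ otherwise.

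The crucial geometric input is $\delta<\sigma<2$, which gives $\pi/(2\delta)>\pi/(2\sigma)>\pi/4$; therefore on the overlap $I_F\cap I_G$ the displacement $\phi$ satisfies $|\phi|\ge d:=\pi/(2\delta)-\pi/(2\sigma)>0$. I then split $\theta\in[-\pi,\pi]$ into four pieces and bound $|FG|^2 e^{-\pi r^2}$ on each. Outside $I_F\cup I_G$, the integrand is $O(e^{-\pi r^2})$, trivially integrable. On $I_G\setminus I_F$, $F$ is bounded and the integrand is $\lesssim \exp(2r^\sigma-\pi r^2)$, integrable since $\sigma<2$. On $I_F\setminus I_G$, $G$ is bounded, and since $\beta<2$ gives $\cos(\beta\phi)\ge \cos(\beta\pi/4)>0$ uniformly for $\phi\in[-\pi/4,\pi/4]$, the integrand is bounded by $\exp(\pi(\cos(2\phi)-1)r^2-2\cos(\beta\phi)r^\beta)\lesssim \exp(-c r^\beta)$, which is integrable against $r\,dr\,d\theta$ over this region.

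The only substantive piece is the overlap $I_F\cap I_G$, and this is precisely where the hypothesis $\delta<\sigma$ is essential. On this set $|\phi|\ge d$, so $\sin^2\phi\ge \sin^2 d>0$, and
\[
|FG|^2 e^{-\pi r^2}\lesssim \exp\!\bigl(-2\pi\sin^2(\phi)r^2+2r^\sigma-2\cos(\beta\phi)r^\beta\bigr)\lesssim \exp(-c r^2)
\]
for large $r$, since $\sigma<2$ makes $r^\sigma$ negligible against $r^2$. Adding the four contributions yields $\|FG\|_{\mathcal{F}}^2<\infty$. I do not anticipate any real obstacle: the whole matter reduces to the elementary angular inequality $\pi/(2\delta)>\pi/(2\sigma)$ together with the pointwise estimates for $f_1$ and $G$ already provided in the excerpt.
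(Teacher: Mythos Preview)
Your argument is correct and follows essentially the same approach as the paper's proof: an angular decomposition of $[-\pi,\pi]$ exploiting the key inequality $\pi/(2\delta)>\pi/(2\sigma)$ (from $\delta<\sigma$) to separate the peak directions of $F$ from the growth region of $G$. The paper packages the bound as $\log|FG|-\tfrac{\pi}{2}r^2\le -dr^\beta$ on a small neighborhood $J$ of $\pm\pi/(2\delta)$ and $\le -dr^2$ elsewhere, while you split into the four regions determined by $I_F$ and $I_G$; the underlying estimates are identical. (Minor slip: you presumably mean ``decay of $f$ on $\partial\Omega$'' rather than ``decay of $f_1$'' when justifying $f_1=O(1)$ outside $\overline{\Omega}$.)
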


\begin{proof}
By \eqref{star1} and \eqref{star2u} we have
\begin{multline*}
\log|(FG)(re^{i\theta})|-\frac{\pi}2r^2\\ \le 
\chi_{[\pi/(2\delta)-\pi/4,\pi/(2\delta)+\pi/4]}(\theta)
\Bigl[
\cos\Bigl(2\theta-\frac{\pi}{\delta}\Bigr)\cdot\frac{\pi}2r^2-\cos\Bigl(\beta\theta-\frac{\beta\pi}{2\delta} \Bigr)\cdot r^\beta
\Bigr]^+
\\+\chi_{[-\pi/(2\delta)-\pi/4,-\pi/(2\delta)+\pi/4]}(\theta)\Bigl[\cos\Bigl(2\theta+\frac{\pi}{\delta}\Bigr)\cdot\frac{\pi}2r^2-\cos\Bigl(\beta\theta+\frac{\beta\pi}{2\delta} \Bigr)\cdot r^\beta\Bigr]^+\\
+\chi_{[-\pi/(2\sigma),\pi/(2\sigma)]}(\theta)\bigl(\cos(\sigma\theta)r^\sigma\bigr)-\frac{\pi}2r^2+O(1),\qquad r\to\infty,\,\theta\in[-\pi,\pi].
\end{multline*}
Hence, for some $\ve=\ve(\sigma,\delta)>0$, $d=d(\beta,\sigma,\delta)>0$, we have 
\begin{multline*}
\log|(FG)(re^{i\theta})|-\frac{\pi}2r^2 \\ \le 
\begin{cases}
\, -dr^\beta,\quad \theta\in J=\Bigl[\frac{\pi}{2\delta} -\ve,\frac{\pi}{2\delta} +\ve\Bigr]\cup \Bigl[-\frac{\pi}{2\delta} -\ve,-\frac{\pi}{2\delta} +\ve\Bigr],\\
\, -dr^2,\quad \theta\in[-\pi,\pi]\setminus J,
\end{cases} 
\end{multline*}
and, then, $FG\in \mathcal F$. 
\end{proof}
\medskip

{\bf Step 4: End of the proof.}
Now, we argue as in \cite{Scand}. Suppose that $P_n$ are polynomials such that
$$
P_nF\overset{\mathcal F}{\to}FG.
$$
Then for some $C_1>0$ we have  
$$
\{P_n/C_1\}_{n\ge 1}\in\mathcal P_1,
$$
and by Claim~\ref{est} we get 
\begin{equation}
|P_n(x)|\le CC_1\exp(x^\gamma),\qquad n\ge1,\,x\ge 0.
\label{44d}
\end{equation}
Since $P_nF$ tends to $FG$ uniformly on compact subsets of the complex plane, \eqref{44d} contradicts \eqref{star1u}, and  
$$
FG\not\in\clos_{\mathcal F}\{\mathcal PF\}.
$$

\end{document}